\numberwithin{equation}{section}
\newtheorem{theorem}{Theorem}[section]
\newtheorem{definition}[theorem]{Definition}
\newtheorem{lemma}[theorem]{Lemma}
\newtheorem{corollary}[theorem]{Corollary}
\newtheorem{remark}[theorem]{Remark}
\theoremstyle{definition}
\newtheorem{algorithm}{Algorithm}[section]
\begin{document}

\title{Normalized Wolfe-Powell-type local minimax method for finding multiple unstable solutions of nonlinear elliptic PDEs\thanks{This work was supported by NSFC grants 12171148, 11771138 and the Construct Program of the Key Discipline in Hunan Province. Liu's work was also partially supported by the NSFC grants 12101252 and 11971007. Yi's work was also partially supported by the NSFC grant 11901185, National Key R\&D Program of China (No. 2021YFA1001300) and the Fundamental Research Funds for the Central Universities 531118010207.}}

\author{Wei Liu\thanks{Key Laboratory of Computing and Stochastic Mathematics (Ministry of Education), Hunan Normal University, Changsha, Hunan 410081, China. \emph{Present address:} South China Research Center for Applied Mathematics and Interdisciplinary Studies, South China Normal University, Guangzhou 510631, China (Email: {\tt wliu@m.scnu.edu.cn}).} 
\and 
Ziqing Xie\thanks{Key Laboratory of Computing and Stochastic Mathematics (Ministry of Education), Hunan Normal University, Changsha, Hunan 410081, China (Email: {\tt ziqingxie@hunnu.edu.cn}).}
\and 
Wenfan Yi\thanks{Corresponding author. School of Mathematics, Hunan University, Changsha, Hunan 410082, China (Email: {\tt wfyi@hnu.edu.cn}).}
}

\date{}

\maketitle

\vspace{-3ex}

\begin{abstract}\footnotesize
The local minimax method (LMM) proposed in [Y. Li and J. Zhou, SIAM J. Sci. Comput., 23(3), 840--865 (2001)] and [Y. Li and J. Zhou, SIAM J. Sci. Comput., 24(3), 865--885 (2002)] is an efficient method to solve nonlinear elliptic partial differential equations (PDEs) with certain variational structures for multiple solutions. The steepest descent direction and the Armijo-type step-size search rules are adopted in [Y. Li and J. Zhou, SIAM J. Sci. Comput., 24(3), 865--885 (2002)] and play a significant role in the performance and convergence analysis of traditional LMMs. In this paper, a new algorithm framework of the LMMs is established based on general descent directions and two normalized (strong) Wolfe-Powell-type step-size search rules. The corresponding algorithm framework named as the normalized Wolfe-Powell-type LMM (NWP-LMM) is introduced with its feasibility and global convergence rigorously justified for general descent directions. As a special case, the global convergence of the NWP-LMM algorithm combined with the preconditioned steepest descent (PSD) directions is also verified. Consequently, it extends the framework of traditional LMMs. In addition, conjugate gradient-type (CG-type) descent directions are utilized to speed up the NWP-LMM algorithm. Finally, extensive numerical results for several semilinear elliptic PDEs are reported to profile their multiple unstable solutions and compared for different algorithms in the LMM's family to indicate the effectiveness and robustness of our algorithms. In practice, the NWP-LMM combined with the CG-type direction indeed performs much better than its known LMM companions. 

\vspace{1ex}
{\bfseries Key words.} 
semilinear elliptic PDEs, 
multiple unstable solutions,
local minimax method,
normalized strong Wolfe-Powell-type search rule,
conjugate gradient-type descent direction,
general descent directions,
global convergence

\vspace{1ex}
{\bfseries AMS subject classifications.}
35J20, 35B38, 65N12, 65J15, 65Jxx
\end{abstract}


\section{Introduction}

Various nonlinear problems in physics, chemistry, biology and materials sciences can be reduced to consider multiple solutions of the Euler-Lagrange equation associated with a continuously Fr\'{e}chet-differentiable nonlinear functional $E$ defined on a real Hilbert space $H$, i.e.,
\begin{equation}\label{eq:ELeq}
  E'(u)=0, \quad u\in H,
\end{equation}
where $E'$ is the Fr\'{e}chet-derivative of $E$. Actually, solutions of the Euler-Lagrange equation \eqref{eq:ELeq} are called critical points of the functional $E$. The most well-known candidates for critical points are local extrema to which classical variational and optimization methods have contributed a lot. 

Nowadays, with the development of new experimental techniques, it has been possible to observe local unstable equilibria or transient excited states in numerous physical/chemical/biological systems. Consequently, their theoretical and numerical studies have attracted increasing attentions \cite{Chang1993,CZN2000IJBC,LWZ2013JSC,Rabinowitz1986,XYZ2012SISC,ZRSD2016npj}. However, these local unstable equilibria or transient excited states are related to critical points that are not local extrema, and then called saddle points. Virtually, in terms of the instability analysis, for a critical point $u_*$ with its second-order Fr\'{e}chet-derivative $E''(u_*)$ existing, its instability can be depicted by its Morse index (MI) \cite{Chang1993}, which is defined as the maximal dimension of subspaces of $H$ on which the linear operator $E''(u_*)$ is negative-definite. In fact, for a nondegenerate critical point $u_*$, i.e., $E''(u_*)$ is invertible, if its $\mathrm{MI}=0$, it is a strict local minimizer and then a stable critical point, while if its $\mathrm{MI}>0$, then it is an unstable critical point. Generally speaking, the higher MI means the more instability.

Owing to the instability and multiplicity of saddle points, the design and analysis of numerical methods for grasping saddle points in a stable way is much more challenging than that for stable critical points. In recent years, various numerical methods were developed to capture saddle points in a stable way, such as the (climbing) string method \cite{ERV2002PRB,RV2013JCP}, the gentlest ascent dynamics \cite{EZ2011NL} and the (shrinking) dimer method \cite{HJ1999JCP,ZD2012SINUM}. Nevertheless, the methods mentioned above mainly focus on finding saddle points with $\mathrm{MI}=1$.

In this paper, we are interested in stable and efficient numerical computations for multiple saddle points with high MIs. Existing methods in this area include the search extension method \cite{CX2004CMA,CX2008SCM} and its modified versions \cite{LXY2021SSM,XCX2005IMA}, the augmented partial Newton method and its variants \cite{LWZ2017JSC,XYZ2015JCAM}, the high-index optimization-based shrinking dimer method \cite{YZZ2019SISC} and its extension to non-gradient systems \cite{YYZ2021SCM}, etc.. Recently, some dynamical methods for finding constrained saddle points with high MIs were also developed \cite{LXY-CGAD,YHZ2022JSC}. On the other hand, motivated by classical minimax theorems in the critical point theory (see, e.g., \cite{Rabinowitz1986} and references therein) and numerical researches of Choi-McKenna \cite{CM1993NATMA}, Ding-Costa-Chen \cite{DCC1999NA} and Chen-Zhou-Ni \cite{CZN2000IJBC}, Li and Zhou proposed a local minimax method (LMM) for various high-MI saddle points based on a local minimax characterization of them \cite{LZ2001SISC}. Then in \cite{XYZ2012SISC}, Xie et al. modified the LMM with a significant relaxation on the domain of the {\em local peak selection}, which is a crucial notion for the LMM and will be illustrated in details later. According to \cite{LZ2001SISC,XYZ2012SISC}, the LMM grasps a saddle point with $\mathrm{MI}=n$ ($n\in\mathbb{N}^+$) by dealing with a two-level local minimax problem as
\begin{equation}\label{eq:minimax}
  \min_{v\in S_H}\max_{w\in[L, v]}E(w),
\end{equation}
where $S_H=\{v\in H: \|v\|=1\}$ is the unit sphere with $\|\cdot\|$ the norm in $H$, $L\subset H$ is a given $(n-1)$-dimensional closed subspace usually constructed based on some known or previously found critical points, and $[L, v]=\{tv+w_L:t\geq0,w_L\in L\}$ denotes a closed half subspace. Actually, the inner local maximization is an optimization problem in the $n$-dimensional half subspace $[L,v]$, which can be solved efficiently by standard optimization algorithms in Euclidean spaces. The outer constrained local minimization, which is generally infinite-dimensional and much more challenging in the numerical computation, is the major concern of the LMM. For the sake of handling this task, the LMM adopts a normalized iterative scheme (NIS) with the steepest descent direction $d^{SD}_k=-g_k$, i.e., 
\begin{equation}\label{eq:lmmiter}
  v_{k+1}=v_k(\alpha_k)=\frac{v_k-\alpha_k g_k}{\|v_k-\alpha_k g_k\|},\quad
  w_{k+1}=p(v_{k+1}),\quad k= 0, 1, 2,\ldots,
\end{equation}
where $\alpha_k>0$ is a step-size and $g_k=\nabla E(w_k)\in H$ is the gradient of $E$ at $w_k=p(v_k)$ with $p(v_k)$ representing a local maximizer of $E$ on $[L, v_k]$, known as the so-called {\em local peak selection} of $E$ w.r.t. $L$ at $v_k$. Actually, the local peak selection $p(v_k)$ can be expressed as $p(v_k)=t_k v_k+w_k^L$ for some $t_k\geq0$ and $w_k^L\in L$ \cite{LZ2001SISC}.

One of fundamental problems for the NIS \eqref{eq:lmmiter} is how to choose a suitable step-size $\alpha_k$. In earliest implementations of the LMM, a normalized exact step-size search rule was employed and aimed to find the step-size $\alpha_k>0$ such that the functional $E(p(v_k(\alpha)))$ attains its minimum \cite{LZ2001SISC}, i.e.,
\begin{equation}\label{ex-s}
  E(p(v_k(\alpha_k))) = \min_{\alpha>0} E(p(v_k(\alpha))),\quad k= 0, 1, 2,\ldots.
\end{equation}
Nevertheless, such a step-size search rule is very expensive in practical computations and it is even hard to establish the global convergence of the corresponding LMM algorithm. To compensate for this shortage, several normalized inexact step-size search rules with the advantages of low computational cost and easy implementation have been introduced in the literature to choose the step-size $\alpha_k>0$ such that the decrease amount $E(p(v_k))-E(p(v_k(\alpha_k)))>0$ is acceptable.

Note that a widely applied normalized inexact step-size search rule in traditional LMMs is the normalized Armijo-type step-size search rule, which was first introduced in \cite{LZ2002SISC} and further simplified in \cite{YZ2005SISC} as the following form 
\begin{equation}\label{eq:armijo}
  E(p(v_k(\alpha_k))) \leq E(p(v_k))-\frac{1}{4} \alpha_k t_k\|g_k\|^2,\quad k= 0, 1, 2,\ldots.
\end{equation}
In fact, the factor ${1}/{4}$ can be replaced by any constant $\sigma\in(0,1)$ \cite{LXY2021CMS}. Thanks to this step-size search rule, global convergence results for the normalized Armijo-type LMM (NA-LMM) algorithm were established in \cite{LZ2002SISC,Z2017CAMC}. However, the decrease condition \eqref{eq:armijo} is satisfied for all sufficiently small step-sizes (see Fig.~\ref{fig:SearchRulesOld}), hence some artificial safeguards are needed to prevent step-sizes from being too small and the algorithm from interminable backtracking \cite{LZ2002SISC,Z2017CAMC}. Actually, the backtracking strategy chooses the largest step-size in the sequence $\{\lambda\rho^m\}_{m\in\mathbb{N}}$ (for a given trial step-size $\lambda>0$ and a backtracking factor $\rho\in(0,1)$) that satisfies the normalized Armijo-type search rule. It plays an important role not only in the numerical implementation but also in the convergence analysis of the NA-LMM. Nevertheless, a choice of appropriate parameters $\lambda$ and $\rho$ is not known a priori. Recently, a normalized Goldstein-type step-size search rule was proposed in \cite{LXY2021CMS} to guarantee the sufficient decrease of the functional and prevent step-sizes from being too small simultaneously. The feasibility and global convergence analysis of the normalized Goldstein-type LMM (NG-LMM) were also provided in \cite{LXY2021CMS}. Actually, the normalized Goldstein-type step-size search rule in it makes progress with two inequalities, which can be formulated as
 \begin{equation}\label{eq:goldstein}
  -\delta \alpha_k t_k\|g_k\|^2 \leq E(p(v_k(\alpha_k)))-E(p(v_k)) \leq -\sigma \alpha_k t_k\|g_k\|^2,\quad k= 0, 1, 2,\ldots,
\end{equation}
with constants $\sigma$ and $\delta$ satisfying $0<\sigma<\delta<1$. Unfortunately, as shown in Fig.~\ref{fig:SearchRulesOld}, the normalized Goldstein-type step-size search rule may exclude the minimizer $\alpha_*$ of $E(p(v_k(\alpha)))$ outside the acceptable interval $[\bar{\alpha}_1,\bar{\alpha}_2]$. Thus, more effective and reasonable step-size search rules may devote to the LMM for credibly capturing saddle points of the functional $E$.

\begin{figure}[!ht]
\centering
\includegraphics[width=.6\textwidth]{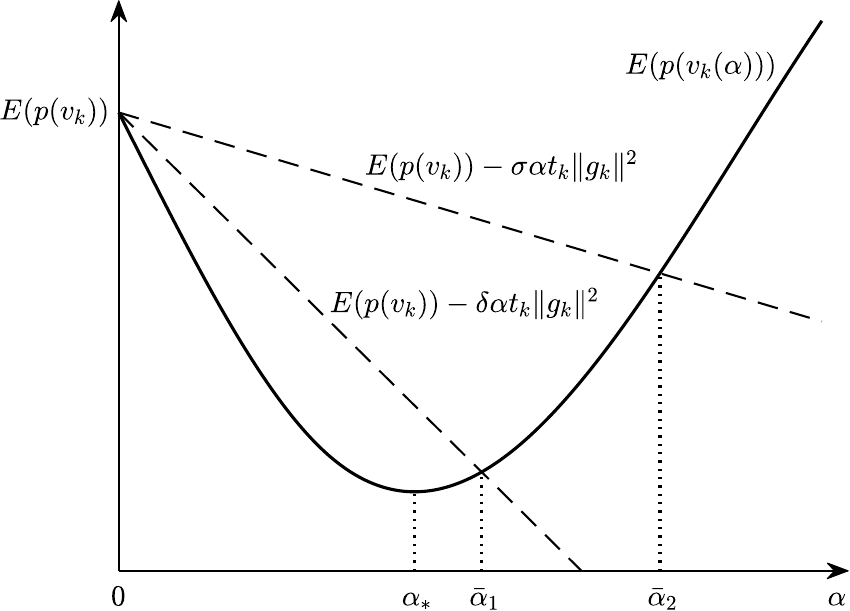}
\caption{Illustration of normalized Armijo- and Goldstein-type step-size search rules in traditional LMMs: the acceptable interval of the  Armijo-type step-size is $(0,\bar{\alpha}_2]$, while the acceptable interval of the Goldstein-type step-size is $[\bar{\alpha}_1,\bar{\alpha}_2]$.}
\label{fig:SearchRulesOld}
\end{figure}

Furthermore, the convergence rate is another fundamental problem of the NIS \eqref{eq:lmmiter} for solving the outer minimization in the two-level local optimization problem \eqref{eq:minimax}. As shown in the NIS \eqref{eq:lmmiter}, the steepest descent direction $d_k^{SD}=-g_k=-\nabla E(p(v_k))$ was chosen as a descent direction in all existing LMM algorithms since 2001. As a result, they have certain limitations in terms of convergence rate. It is well known that, in line search algorithms for unconstrained optimizations in Euclidean spaces, there are many choices of descent directions that may have better performance than the steepest descent direction, such as the conjugate gradient (CG) direction and the quasi-Newton direction, which usually have rapid convergence rate. Therefore, in this paper we try to design some improved iterative schemes of the form as \eqref{eq:lmmiter} by replacing the steepest descent direction $d_k^{SD}=-g_k$ by other more efficient descent directions $d_k\in H$ for the outer minimization process to improve the numerical performance and convergence rate of the LMMs' family. We note that the CG and quasi-Newton methods in optimization theory are often used in combination with some Wolfe-Powell line search strategy \cite{Powell1976,W1969SIAMRev,W1971SIAMRev} that consists of the Armijo condition and a curvature condition. In fact, such a curvature condition on the step-size is particularly important for both algorithm implementation and convergence analysis of the CG and quasi-Newton methods.

Inspired by the discussions above, this paper is aimed to develop a new LMM framework based on general descent directions and the (strong) Wolfe-Powell-type step-size search rules, called a normalized Wolfe-Powell-type LMM (NWP-LMM), to capture multiple unstable solutions of the Euler-Lagrange equation \eqref{eq:ELeq} and provide the possibility to speed up the convergence. By employing some curvature properties, two types of normalized Wolfe-Powell-type step-size search rules will be introduced for the LMM with general descent directions. Their mathematical justifications and global convergence will be established rigorously for general descent directions by making full use of these curvature properties, which obviously distinguish from those of the NA-LMM \cite{LZ2002SISC,XYZ2012SISC} and NG-LMM \cite{LXY2021CMS}. Finally, two types of descent directions, i.e., the preconditioned steepest descent (PSD) direction and the CG-type descent direction, will be proposed and compared to be implemented in our NWP-LMM algorithm for computing multiple unstable solutions of several semilinear elliptic partial differential equations (PDEs), such as the nonlinear Schr\"{o}dinger equation (NLSE), H\'{e}non equation and Chandrasekhar equation. Indeed, it will be seen that the CG-type descent direction can greatly speed up our NWP-LMM algorithm. It is worthwhile to point out that the steepest descent direction can be replaced by a general descent direction in the devise of the traditional normalized Armijo-type and Goldstein-type LMM algorithms. Further, both the feasibility and global convergence of them can be verified in the line of our approach.

The rest of this paper is organized as follows. Firstly, some preliminaries for the LMM are provided in section~\ref{sec:pre}. Then, in section~\ref{sec:nw-lmm}, the NWP-LMM framework based on general descent directions and the normalized (strong) Wolfe-Powell-type step-size search rules is introduced. Its feasibility and some related properties are also discussed in this section. Global convergence of the NWP-LMM algorithm with general descent directions is verified rigorously in section~\ref{sec:gcvg}. In addition, two different types of descent directions, i.e., the PSD and CG-type descent directions, are proposed and analyzed in section~\ref{sec:dk} to feasibly implement our NWP-LMM algorithms. Furthermore, section~\ref{sec:numer} reports the detailed numerical results in 2D including the numerical comparison of different LMM algorithms for above mentioned semilinear elliptic PDEs to illustrate the effectiveness and robustness of our approach. Finally, some conclusions are drawn in section~\ref{sec:cons}.

\section{Preliminaries}\label{sec:pre}

For the convenience of discussions later, we introduce some notations and basic lemmas for the LMM in this section.

Let $(\cdot,\cdot)$ and $\|\cdot\|$ be respectively the inner product and norm in the Hilbert space $H$. Denote $2^H$ as the set of all subsets of the Hilbert space $H$, $S_H=\{v\in H:\|v\|=1\}$ as the unit sphere in $H$ and $X^\bot$ as the orthogonal complement to a subspace $X\subset H$. Suppose that $L$, serving as a so-called support space later, is a given closed finite-dimensional subspace in $H$. Define the half subspace $[L, v]:=\{tv+w^L:t\geq0,w^L\in L\}$ for any $v\in S_H$. Throughout this paper, we assume that the functional $E$ has a local minimizer at $0\in H$ and focus on finding nontrivial saddle points of $E$. The {\em local peak selection}, a crucial notion, is defined as follows (cf. \cite{LZ2001SISC,LXY2021CMS,XYZ2012SISC}).
\begin{definition}\label{def:pv}
The {\em peak mapping} of $E$ w.r.t. $L$ is a set-valued mapping $P:S_H\to2^H$ s.t., for any $v\in S_H$, $P(v)$ is the set of all local maximum points of $E$ on $[L,v]$. 
A {\em peak selection} of $E$ w.r.t. $L$ is a single-valued mapping $p:S_H\to H$ s.t.
\[ p(v)\in P(v),\quad \forall \, v\in S_H. \]
For a given $v\in S_H$, we say that $E$ has a {\em local peak selection} w.r.t. $L$ at $v$ if there is a neighborhood $\mathcal{N}(v)$ of $v$ and a mapping $p:\mathcal{N}(v)\cap S_H\to H$ s.t.
\[ p(u)\in P(u),\quad \forall\, u\in \mathcal{N}(v)\cap S_H. \]
\end{definition}

Let $\langle\cdot,\cdot\rangle$ denote the duality pairing between $H$ and its dual space $H^*$. By definition, each local peak selection $p(v)$, with $v\in S_H$, can be expressed as $p(v)=t_vv+w_v^L$ with $t_v\geq0$ and $w_v^L\in L$. To avoid the degeneracy, we always assume that $t_v>0$, i.e., $p(v)\notin L$, in the subsequent analysis. The above definition implies that $p(v)$ belongs to the well-known Nehari manifold $\mathcal{N}_E:=\{u\in H\backslash\{0\}:\langle E'(u),u\rangle=0\}$, which contains all nontrivial critical points of the functional $E$. In fact, the following orthogonality holds obviously.


\begin{lemma}[\cite{LXY2021CMS,XYZ2012SISC}]\label{lem:orth}
Assume that $E\in C^1(H,\mathbb{R})$ has a local peak selection $p$ w.r.t. $L$ at $v\in S_H$ satisfying $p(v)\notin L$. Then, $\langle E'(p(v)),w\rangle=0$, $\forall w\in[L, v]$. In particular, $\langle E'(p(v)),p(v)\rangle=0$.
\end{lemma}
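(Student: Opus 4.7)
The plan is to extract the desired orthogonality from the first-order necessary condition satisfied by the local maximum $p(v)$ on the half-subspace $[L,v]$. By the structure of $[L,v]$, I would first write $p(v)=t_{0}v+w_{0}^{L}$ for some $t_{0}\ge 0$ and $w_{0}^{L}\in L$, as already noted in the discussion preceding \eqref{eq:lmmiter}. The goal then splits into showing $\langle E'(p(v)),w^{L}\rangle=0$ for every $w^{L}\in L$ and $\langle E'(p(v)),v\rangle=0$, since these two facts together cover every $w=tv+w^{L}\in[L,v]$ by linearity and, specialized to $w=p(v)$, yield the ``in particular'' assertion.

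For the $L$-component I would exploit that $L$ is a linear subspace: for any $w^{L}\in L$ the curve $s\mapsto p(v)+s\,w^{L}=t_{0}v+(w_{0}^{L}+s\,w^{L})$ stays inside $[L,v]$ for every real $s$, so the smooth scalar function $s\mapsto E(p(v)+s\,w^{L})$ attains an unconstrained local maximum at $s=0$; differentiating and using $E\in C^{1}(H,\mathbb{R})$ gives $\langle E'(p(v)),w^{L}\rangle=0$. For the direction along $v$, assuming $t_{0}>0$, the curve $s\mapsto(t_{0}+s)v+w_{0}^{L}$ remains in $[L,v]$ for $|s|<t_{0}$, so the same one-variable argument produces $\langle E'(p(v)),v\rangle=0$. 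Combining these by linearity yields $\langle E'(p(v)),w\rangle=0$ for all $w\in\operatorname{span}(v)+L$, which contains $[L,v]$.

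The main obstacle is the degenerate case $t_{0}=0$, in which only the forward $v$-direction is admissible and the first-order condition delivers merely the inequality $\langle E'(p(v)),v\rangle\le 0$. I would rule this case out via the standing assumption that $0$ is a local minimizer of $E$: if $t_{0}=0$, then $p(v)=w_{0}^{L}\in L$, and together with $\langle E'(p(v)),w^{L}\rangle=0$ for all $w^{L}\in L$ one can show $p(v)$ is actually a local minimum (not a local maximum) of $E$ restricted to $L\subset[L,v]$, contradicting the peak-selection property unless $p(v)=0$, which is excluded because only nontrivial peak selections are considered. Hence the analysis proceeds under $t_{0}>0$, and the orthogonality follows as above.
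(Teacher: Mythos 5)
Your core argument is the standard first-order one and is correct where it applies: for each $w^L\in L$ the line $s\mapsto p(v)+s\,w^L=t_0v+(w_0^L+s\,w^L)$ stays in $[L,v]$ for all real $s$, so the scalar function $s\mapsto E(p(v)+s\,w^L)$ has an interior local maximum at $s=0$ and its derivative $\langle E'(p(v)),w^L\rangle$ vanishes; and when $t_0>0$ the two-sided perturbation $(t_0+s)v+w_0^L$ likewise gives $\langle E'(p(v)),v\rangle=0$, after which linearity covers every $w=tv+w^L\in[L,v]$ and, with $w=p(v)$, the ``in particular'' claim. This is exactly the argument the paper is alluding to when it calls the orthogonality obvious and defers to the cited references.

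The gap is your disposal of the case $t_0=0$, and the step as written does not work. Knowing $\langle E'(p(v)),w^L\rangle=0$ for all $w^L\in L$ only says that $p(v)$ is a critical point of $E$ restricted to $L$; it does not make $p(v)$ a local \emph{minimum} there, and in any case a local maximum of $E$ on $[L,v]$ automatically restricts to a local maximum of $E$ on $L\subset[L,v]$, so there is no contradiction to be had. The standing assumption that $0\in H$ is a local minimizer of $E$ concerns the origin and says nothing about a nonzero $p(v)=w_0^L\in L$; moreover, excluding only $p(v)=0$ (via ``nontriviality'') would not exclude $t_0=0$. When $t_0=0$ the point $p(v)$ lies on the boundary of the half-space $[L,v]$, the $v$-direction is admissible only one-sidedly, and the first-order condition yields merely $\langle E'(p(v)),v\rangle\le 0$; this inequality can be strict (take $H=\mathbb{R}^2$, $L=\mathrm{span}\{e_1\}$, $v=e_2$ and $E(x,y)=-(x-1)^2-y$ near $(1,0)$, suitably modified elsewhere: then $p(v)=(1,0)$, $t_0=0$ and $\langle E'(p(v)),v\rangle=-1$). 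So the degenerate case cannot be argued away; the conclusion for $w=v$ genuinely requires $t_v>0$, which is how the lemma is in fact invoked throughout the paper, where $t_{\bar v}>0$ or $t_k\ge\delta>0$ is always a hypothesis (Theorem~\ref{thm:lmt0}, condition (ii) of Theorems~\ref{thm:cvg-wplmm} and~\ref{thm:cvg-psd}). Note that the ``in particular'' statement $\langle E'(p(v)),p(v)\rangle=0$ does survive even when $t_0=0$, since then $p(v)=w_0^L\in L$ and the $L$-part of your argument already applies to it.
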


The following property follows from direct computation and is frequently utilized in the feasibility and convergence discussions for the LMM-type algorithms.

\begin{lemma}[\cite{LXY2021CMS}]\label{lem:pv-tv}
Let $\bar{v}\in S_H\backslash L$. Suppose that the local peak selection $p$ of $E$ w.r.t. $L$ is continuous at $\bar{v}$. Denote $p(v)=t_v v+w_v^L$ and $p(\bar{v})=t_{\bar{v}} \bar{v}+w_{\bar{v}}^L$, where $t_v,t_{\bar{v}}\geq0$ and $w_v^L,w_{\bar{v}}^L\in L$. If $v \rightarrow \bar{v}$, then $t_v\rightarrow t_{\bar{v}}$ and $w_v^L\to w_{\bar{v}}^L$.
\end{lemma}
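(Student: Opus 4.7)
The plan is to extract $t_v$ from $p(v)$ by projecting onto the orthogonal complement of $L$ and then to recover $w_v^L$ from the identity $w_v^L=p(v)-t_v v$. Let $P_L$ denote the orthogonal projection onto the (closed, finite-dimensional) subspace $L$ and set $Q:=I-P_L$. Applying $Q$ to $p(v)=t_v v+w_v^L$ with $w_v^L\in L$ gives $Q(p(v))=t_v Q(v)$. Since $Q$ is self-adjoint and idempotent, taking the inner product of both sides with $Q(v)$ yields
\[
   t_v\,\|Q(v)\|^2=(Q(p(v)),Q(v))=(p(v),Q(v)).
\]
Crucially, the hypothesis $\bar v\in S_H\setminus L$ forces $Q(\bar v)\ne 0$, and by continuity of $Q$ the scalar $\|Q(v)\|^2$ stays bounded away from zero on a sufficiently small neighborhood of $\bar v$. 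Hence, for $v$ close enough to $\bar v$, the formula
\[
   t_v=\frac{(p(v),Q(v))}{\|Q(v)\|^2}
\]
is well defined and expresses $t_v$ as a continuous function of $v$ and $p(v)$.

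Next I would pass to the limit $v\to\bar v$ in this formula, combining the hypothesized continuity of $p$ at $\bar v$ (which yields $p(v)\to p(\bar v)$) with the continuity of the linear projection $Q$ and of the inner product. This gives
\[
   t_v\longrightarrow\frac{(p(\bar v),Q(\bar v))}{\|Q(\bar v)\|^2}=t_{\bar v},
\]
where the last equality is obtained by applying the same derivation to $\bar v$ itself. The convergence $w_v^L\to w_{\bar v}^L$ then follows immediately from $w_v^L=p(v)-t_v v$: the right-hand side tends to $p(\bar v)-t_{\bar v}\bar v=w_{\bar v}^L$.

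The only substantive point is ensuring that the extraction of $t_v$ is legitimate on a whole neighborhood of $\bar v$, which reduces to the non-degeneracy $Q(\bar v)\ne 0$; this is exactly the content of the assumption $\bar v\notin L$. Once this is in place, the entire argument is a short continuity computation, and no compactness or finite-dimensional reduction beyond the existence of $P_L$ (guaranteed because $L$ is closed) is required.
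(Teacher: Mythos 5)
Your argument is correct and is essentially the ``direct computation'' the paper alludes to when citing \cite{LXY2021CMS}: one recovers $t_v$ by projecting $p(v)$ onto $L^\bot$, using $\bar v\notin L$ to keep $\|Q(v)\|$ bounded away from zero near $\bar v$, and then passes to the limit using the continuity of $p$; the convergence of $w_v^L=p(v)-t_vv$ follows at once. No gaps.
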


Imitating similar lines of the proof for Theorem~2.1 in \cite{LZ2001SISC}, the following local minimax principle can be obtained and is referred to Theorem~4.1 in \cite{LXY2021CMS}.
\begin{theorem}\label{thm:lmt0}
If $E\in C^1(H,\mathbb{R})$ has a local peak selection w.r.t. $L$ at $\bar{v}\in S_H\backslash L$, denoted by $p(\bar{v})=t_{\bar{v}}\bar{v}+w_{\bar{v}}^L$, satisfying (i) $p$ is continuous at $\bar{v}$; (ii) $t_{\bar{v}}>0$; and (iii) $\bar{v}$ is a local minimizer of $E(p(v))$ on $S_H$, then $p(\bar{v})\notin L$ is a critical point of $E$.
\end{theorem}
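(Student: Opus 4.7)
The plan is to argue by contradiction: assume $E'(p(\bar v))\neq 0$, construct a curve $v(s)\in S_H$ along which $E(p(v(s)))$ strictly decreases, and contradict hypothesis~(iii). First, observing that $t_{\bar v}>0$ and $\bar v\in S_H\setminus L$, I would note immediately that $p(\bar v)=t_{\bar v}\bar v+w_{\bar v}^L\notin L$, since otherwise $t_{\bar v}\bar v=p(\bar v)-w_{\bar v}^L\in L$ would force $t_{\bar v}=0$. By Lemma~\ref{lem:orth}, $E'(p(\bar v))$ annihilates the whole half-space $[L,\bar v]$, and hence the finite-dimensional subspace $M:=L+\mathrm{span}\{\bar v\}$.

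Next I would exploit this annihilation to locate a decrease direction. Under the contradiction hypothesis, the Riesz representative of $E'(p(\bar v))$ must have a nonzero component in $M^{\perp}$; normalizing and possibly flipping sign, I would pick $d\in M^{\perp}$ with $\|d\|=1$ and $\langle E'(p(\bar v)),d\rangle<0$. Since $d\perp \bar v$, the curve $v(s):=(\bar v+sd)/\|\bar v+sd\|=(\bar v+sd)/\sqrt{1+s^2}$ is smooth in a neighborhood of $s=0$ with $v(0)=\bar v$ and $v(s)-\bar v=sd+O(s^2)$. For $|s|$ small, $v(s)$ lies in the neighborhood $\mathcal{N}(\bar v)\cap S_H$ on which the local peak selection is defined and stays off $L$, so Lemma~\ref{lem:pv-tv} gives $p(v(s))=t(s)v(s)+w^L(s)$ with $t(s)\to t_{\bar v}$ and $w^L(s)\to w_{\bar v}^L$ as $s\to 0$.

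The core step is a two-sided comparison through the auxiliary point $q(s):=t(s)\bar v+w^L(s)\in[L,\bar v]$, which lies close to $p(\bar v)$ for small $s$. On one hand, since $p(\bar v)$ is a local maximum of $E$ on $[L,\bar v]$, the continuity $q(s)\to p(\bar v)$ yields $E(q(s))\le E(p(\bar v))$ for all small $s$. On the other hand, $p(v(s))-q(s)=t(s)(v(s)-\bar v)$, so a first-order Taylor expansion of $E\in C^1$ at $p(\bar v)$ (using that both $p(v(s))$ and $q(s)$ converge to $p(\bar v)$) gives
\[
  E(p(v(s)))-E(q(s)) = s\,t_{\bar v}\,\langle E'(p(\bar v)),d\rangle + o(s)\quad\text{as }s\to 0^+.
\]
Because $t_{\bar v}>0$ and $\langle E'(p(\bar v)),d\rangle<0$, the right-hand side is strictly negative for all small $s>0$, whence $E(p(v(s)))<E(q(s))\le E(p(\bar v))$, contradicting~(iii).

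The main obstacle I foresee is bridging the two half-spaces: $E(p(v(s)))$ is the maximum over the perturbed half-space $[L,v(s)]$, while $E(p(\bar v))$ governs $[L,\bar v]$, and there is no direct containment between these sets. The intermediate point $q(s)\in[L,\bar v]$ built from the parameters of $p(v(s))$ is the key device that converts the perturbation of the constraint set into a perturbation of a single point in $[L,\bar v]$; the continuity $t(s)\to t_{\bar v}$, $w^L(s)\to w_{\bar v}^L$ supplied by Lemma~\ref{lem:pv-tv} is exactly what makes $q(s)\to p(\bar v)$ so that the local-maximum inequality applies. Orienting $d$ inside $M^\perp$ also matters: it guarantees $v(s)-\bar v=sd+O(s^2)$ (no drift along $\bar v$) and keeps the first-order term exactly $s\,t_{\bar v}\langle E'(p(\bar v)),d\rangle$, so the sign of the perturbation cleanly contradicts minimality.
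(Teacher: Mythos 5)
Your proof is correct, and it is essentially the standard argument that the paper itself invokes by reference (Theorem~2.1 of Li--Zhou and Theorem~4.1 of \cite{LXY2021CMS}): perturb $\bar{v}$ along the (normalized, negated) gradient direction lying in $[L,\bar{v}]^\bot$, pass through the auxiliary point $q(s)=t(s)\bar{v}+w^L(s)\in[L,\bar{v}]$ whose convergence to $p(\bar{v})$ is supplied by the continuity of the peak selection, and obtain a strict first-order decrease $E(p(v(s)))<E(q(s))\le E(p(\bar{v}))$ contradicting the local minimality of $\bar{v}$. No gaps; the two-sided comparison via $q(s)$ is exactly the device used in the cited proofs.
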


Since the local peak selection $p(\bar{v})$ is a local maximizer of $E$ on the half subspace $[L,\bar{v}]$, Theorem~\ref{thm:lmt0} characterizes a saddle point of $E$ as a solution to the local minimax problem \eqref{eq:minimax}, or equivalently, the local minimization problem of $E$ on the {\em solution submanifold}
\[ \mathcal{M}=\{p(v):\, v\in S_H\}. \]
When a saddle point $u^*$ of $E$ (known as an unstable critical point in $H$) can be characterized as a local solution to the two-level optimization problem \eqref{eq:minimax} of the form $u^*=p(v^*)$ with $v^*$ minimizing $E(p(v))$ on $S_H$, it becomes stable on $\mathcal{M}$, i.e.,
\begin{equation}\label{eq:minEonM}
E(u^*)=\min_{u\in\mathcal{M}} E(u).
\end{equation}
We remark here that, under certain conditions, similar to Theorem~2.2 in \cite{LZ2001SISC} and Theorem~1.5 in \cite{LZ2002SISC}, the existence of the local minimizer of $E(p(v))$ on a subspace of $S_H$ can be verified by employing the Ekeland's variational principle. 
In addition, Theorem~\ref{thm:lmt0} also indicates an important feature of the LMM, which shows that it can stably find different saddle points and avoid computing those we have found. In fact, in order to obtain multiple saddle points, the LMM needs to repeatedly solve the local minimization problem \eqref{eq:minEonM} or the two-level local minimax problem \eqref{eq:minimax} for different choices of $L$, which is usually spanned by some found critical points. Under assumptions of Theorem~\ref{thm:lmt0}, a local minimizer of $E$ on $\mathcal{M}$ is a critical point different from those in $L$. As a result, Theorem~\ref{thm:lmt0} provides a mathematical justification that the LMM can find unstable saddle points of the functional $E$ in a stable way.

Virtually, suitable descent algorithms can work for the minimization process in the optimization problem \eqref{eq:minimax} or \eqref{eq:minEonM}. As noted above, in traditional LMMs, the steepest descent direction serves as a search direction to numerically solve the local minimax problem \eqref{eq:minimax}. While, in this paper, the NWP-LMM will be constructed and analyzed for general descent directions. 

\section{Normalized Wolfe-Powell-type LMM}\label{sec:nw-lmm}
In this section, a NWP-LMM framework for general descent directions will be proposed to capture multiple saddle points of the functional $E$ via solving the optimization problem \eqref{eq:minimax}. In order to hit this goal, two types of normalized Wolfe-Powell-type step-size search rules with general descent directions will be introduced and analyzed by involving curvature conditions. We adopt the same notations as those in the section~\ref{sec:pre} unless specified and begin with some essential properties.

\begin{figure}[!ht]
\centering
\vspace{1ex}
\includegraphics[width=.5\textwidth]{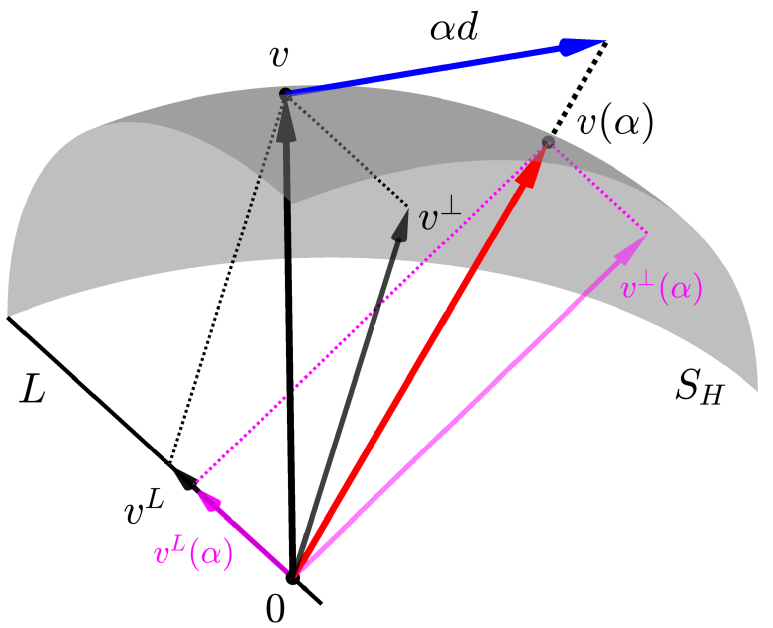}
\caption{Illustration of the normalized iterative scheme \eqref{eq:vs-expr}.}
\label{fig:NIS}
\end{figure}

Let $v\in S_H\backslash L$ and $d\in[L,v]^\bot$. Write $v=v^L+v^\bot$ with $v^L\in L$ and $v^\bot\in L^\bot\backslash\{0\}$. For all $\alpha\in\mathbb{R}$, there holds the following orthogonal decomposition (see Fig.~\ref{fig:NIS}),
\begin{equation}\label{eq:vs-expr}
  v(\alpha)=\frac{v+\alpha d}{\|v+\alpha d\|}
  = \frac{v+\alpha d}{\sqrt{1+\alpha^2\|d\|^2}}
  = v^L(\alpha)+v^\bot(\alpha)\in S_H,
\end{equation}
where
\[
  v^L(\alpha)=\frac{v^L}{\sqrt{1+\alpha^2\|d\|^2}}\in L,\quad
  v^\bot(\alpha)=\frac{v^\bot+\alpha d}{\sqrt{1+\alpha^2\|d\|^2}}\in L^\bot.
\]
Obviously, for all $\alpha\in\mathbb{R}$, $\|v^L(\alpha)\|\leq\|v^L\|$. Since $\|v(\alpha)\|^2=\|v^L(\alpha)\|^2+\|v^\bot(\alpha)\|^2=1$, it follows that $\|v^\bot(\alpha)\|\geq\|v^\bot\|>0$, $\forall\alpha\in\mathbb{R}$. Consequently, $v(\alpha)\in S_H\backslash L$, $\forall\alpha\in\mathbb{R}$. Further, we have the following property.

\begin{lemma}\label{lem:vs}
Let $v\in S_H\backslash L$, $d\in[L,v]^\bot$ and $v(\alpha)$ be expressed as in \eqref{eq:vs-expr}. Then
\begin{equation}\label{eq:vssd}
  \|v(\alpha+s)-v(\alpha)\|\leq|s|\|d\|, \quad \forall\alpha,s\in\mathbb{R}.
\end{equation}
\end{lemma}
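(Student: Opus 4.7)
The plan is to reduce the estimate to elementary planar trigonometry by exploiting the orthogonality built into the hypothesis $d\in[L,v]^\bot$. The key observation is that $v\in[L,v]$ (take $t=1,\,w^L=0$ in the definition of $[L,v]$), so the hypothesis forces $(v,d)=0$. Hence the entire curve $\alpha\mapsto v(\alpha)$ lives in the subspace $\mathrm{span}(v,d)\subset H$, and the estimate becomes essentially planar.

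First I would dispose of the trivial case $d=0$, and then for $d\ne 0$ work in the orthonormal basis $\{v,\,d/\|d\|\}$ of $\mathrm{span}(v,d)$. Using $(v,d)=0$ and $\|v\|=1$ one has $\|v+\alpha d\|=\sqrt{1+\alpha^2\|d\|^2}$, so a direct coordinate computation gives
\[
v(\alpha)=\cos\theta(\alpha)\,v+\sin\theta(\alpha)\,\frac{d}{\|d\|},\qquad \theta(\alpha):=\arctan(\alpha\|d\|).
\]
Thus $v(\alpha)$ is nothing but the arc parametrization of the unit circle in this plane.

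Second, I would apply the standard chord-versus-arc inequality. From the orthonormal expansion above,
\[
\|v(\alpha+s)-v(\alpha)\|^2=2-2\cos\bigl(\theta(\alpha+s)-\theta(\alpha)\bigr)=4\sin^2\!\Bigl(\tfrac{\theta(\alpha+s)-\theta(\alpha)}{2}\Bigr),
\]
and $|\sin x|\le|x|$ yields $\|v(\alpha+s)-v(\alpha)\|\le|\theta(\alpha+s)-\theta(\alpha)|$.

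Finally, the angle increment is controlled by the fundamental theorem of calculus:
\[
|\theta(\alpha+s)-\theta(\alpha)|=\left|\int_{\alpha}^{\alpha+s}\frac{\|d\|}{1+\tau^2\|d\|^2}\,d\tau\right|\le |s|\|d\|,
\]
which proves the claim. There is no real obstacle; the only step that might look delicate is the reduction to a plane, but the hypothesis $d\in[L,v]^\bot$ (combined with $v\in[L,v]$) makes it immediate. As an alternative purely analytic route, one could differentiate $v(\alpha)$ directly in $H$, use $(v,d)=0$ to simplify, verify $\|v'(\alpha)\|=\|d\|/(1+\alpha^2\|d\|^2)\le\|d\|$, and invoke the Bochner fundamental theorem of calculus to reach the same conclusion.
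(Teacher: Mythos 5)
Your proof is correct, but it follows a genuinely different route from the paper's. You reduce to the plane $\mathrm{span}(v,d)$ (legitimately, since $v\in[L,v]$ forces $(v,d)=0$), recognize $v(\alpha)=\cos\theta(\alpha)\,v+\sin\theta(\alpha)\,d/\|d\|$ with $\theta(\alpha)=\arctan(\alpha\|d\|)$ as an arc parametrization of the unit circle, bound the chord by the arc via $|\sin x|\le|x|$, and finish with the fundamental theorem of calculus applied to $\theta$. The paper instead gives a three-line algebraic identity: with $\ell_1=\sqrt{1+(\alpha+s)^2\|d\|^2}$ and $\ell_2=\sqrt{1+\alpha^2\|d\|^2}$ one has $\ell_1 v(\alpha+s)-\ell_2 v(\alpha)=sd$, and expanding $s^2\|d\|^2=\|\ell_1 v(\alpha+s)-\ell_2 v(\alpha)\|^2=(\ell_1-\ell_2)^2+\ell_1\ell_2\|v(\alpha+s)-v(\alpha)\|^2\ge\|v(\alpha+s)-v(\alpha)\|^2$ since $\ell_1\ell_2\ge1$; no differentiation, trigonometry, or case split on $d=0$ is needed. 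Your argument is longer but more informative: it actually establishes the sharper bound $\|v(\alpha+s)-v(\alpha)\|\le|\arctan((\alpha+s)\|d\|)-\arctan(\alpha\|d\|)|$ and makes the geometry of the normalized iteration (motion along a great circle at speed $\|d\|/(1+\alpha^2\|d\|^2)$) explicit, which is the same computation that underlies Lemma 3.2 of the paper. Both proofs are sound; the paper's is the more economical for the stated inequality.
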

\begin{proof}
Denote $\ell_1=\sqrt{1+(\alpha+s)^2\|d\|^2}$ and $\ell_2=\sqrt{1+\alpha^2\|d\|^2}$. 
Noting that $\ell_1,\ell_2\geq1$, 
we have
\begin{align*}
s^2\|d\|^2
 &= \|\ell_1v(\alpha+s)-\ell_2v(\alpha)\|^2 \\
 &= \ell_1^2+\ell_2^2-2\ell_1\ell_2(v(\alpha+s),v(\alpha)) \\
 &= (\ell_1-\ell_2)^2+\ell_1\ell_2\|v(\alpha+s)-v(\alpha)\|^2 \\
 &\geq \|v(\alpha+s)-v(\alpha)\|^2. 
 \qedhere
\end{align*}
\end{proof}

Set $p$ to be a local peak selection of $E$ w.r.t. $L$ at $v\in S_H\backslash L$. 
The following lemma is crucial in constructing the normalized Wolfe-Powell-type step-size search rule.

\begin{lemma}\label{lem:dEpvs}
Suppose $E\in C^1(H,\mathbb{R})$ and let $v\in S_H\backslash L$, $d\in[L,v]^\bot$, $v(\alpha)$ be expressed as in \eqref{eq:vs-expr} and $p(v(\alpha))=t_{v(\alpha)}v(\alpha)+w^L_{v(\alpha)}$ be a local peak selection of $E$ w.r.t. $L$ at $v(\alpha)$. 
If $p$ is locally Lipschitz continuous around $v(\alpha)$ and $t_{v(\alpha)}>0$, then the composite function $\alpha\mapsto E(p(v(\alpha)))$ is continuously differentiable and
\begin{equation}\label{eq:dEpvs}
  \frac{\mathrm{d}}{\mathrm{d} \alpha}E(p(v(\alpha))) = \hat{t}_{v(\alpha)}\langle E'(p(v(\alpha))),d\rangle,
\end{equation}
where $\hat{t}_{v(\alpha)}=t_{v(\alpha)}/\sqrt{1+\alpha^2\|d\|^2}$.
\end{lemma}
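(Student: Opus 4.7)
The plan is to avoid differentiating the (merely Lipschitz) peak selection $p$ directly, and instead exploit the orthogonality relation from Lemma~\ref{lem:orth} to reduce the difference quotient to a quantity involving only $v(\alpha)$, whose $\alpha$-derivative is elementary to compute.

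First, I would form the difference quotient
\[ \Delta(s) := \frac{E(p(v(\alpha+s)))-E(p(v(\alpha)))}{s} \]
and use the first-order Taylor expansion of $E\in C^1(H,\mathbb{R})$ at $p(v(\alpha))$ to obtain
\[ E(p(v(\alpha+s)))-E(p(v(\alpha))) = \langle E'(p(v(\alpha))),\, p(v(\alpha+s))-p(v(\alpha))\rangle + r(s), \]
with $r(s)=o(\|p(v(\alpha+s))-p(v(\alpha))\|)$. The local Lipschitz continuity of $p$ near $v(\alpha)$ combined with Lemma~\ref{lem:vs} (which gives $\|v(\alpha+s)-v(\alpha)\|\le|s|\|d\|$) yields $\|p(v(\alpha+s))-p(v(\alpha))\|=O(|s|)$, so $r(s)=o(|s|)$ and does not contribute in the limit.

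Next I would simplify the main term using the decomposition $p(v(\alpha))=t_{v(\alpha)}v(\alpha)+w^L_{v(\alpha)}$. Since $w^L_{v(\alpha+s)}-w^L_{v(\alpha)}\in L\subset [L,v(\alpha)]$, Lemma~\ref{lem:orth} kills this contribution. Similarly, $\langle E'(p(v(\alpha))),v(\alpha)\rangle=0$. After cancellation, the inner product reduces to
\[ \langle E'(p(v(\alpha))),\, p(v(\alpha+s))-p(v(\alpha))\rangle = t_{v(\alpha+s)}\langle E'(p(v(\alpha))),\, v(\alpha+s)-v(\alpha)\rangle. \]
Now $v(\alpha)=(v+\alpha d)/\ell(\alpha)$ with $\ell(\alpha):=\sqrt{1+\alpha^2\|d\|^2}$ is a smooth $H$-valued function of $\alpha$, and a direct computation gives
\[ v'(\alpha)=\frac{d}{\ell(\alpha)}-\frac{\alpha\|d\|^2}{\ell(\alpha)^2}\,v(\alpha). \]
Taking $s\to 0$, using continuity of $t_{v(\cdot)}$ via Lemma~\ref{lem:pv-tv}, and applying Lemma~\ref{lem:orth} once more to discard the $v(\alpha)$-component of $v'(\alpha)$, the difference quotient converges to
\[ t_{v(\alpha)}\cdot\frac{1}{\ell(\alpha)}\langle E'(p(v(\alpha))),d\rangle = \hat{t}_{v(\alpha)}\langle E'(p(v(\alpha))),d\rangle, \]
which is the desired formula \eqref{eq:dEpvs}.

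Finally, for the continuous differentiability, I would observe that the right-hand side of \eqref{eq:dEpvs} is continuous in $\alpha$: $\ell(\alpha)$ is smooth; the local Lipschitz continuity of $p$ and Lemma~\ref{lem:pv-tv} ensure $\alpha\mapsto t_{v(\alpha)}$ and $\alpha\mapsto p(v(\alpha))$ are continuous; and $E'\in C^0$ transports this continuity to $E'(p(v(\alpha)))$. The main subtlety, which I expect to be the principal obstacle, is the handling of $p$: because $p$ need not be differentiable, the chain rule cannot be invoked on the composite directly, and the argument must lean on Lemma~\ref{lem:orth} to ensure that the non-smooth $L$-component of $p(v(\alpha))$ contributes nothing to the derivative, with the Lipschitz estimate of Lemma~\ref{lem:vs} absorbing the remainder term.
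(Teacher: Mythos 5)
Your proposal is correct and follows essentially the same route as the paper's proof: expand $E$ to first order at $p(v(\alpha))$, absorb the remainder via the local Lipschitz continuity of $p$ together with Lemma~\ref{lem:vs}, use the orthogonality of Lemma~\ref{lem:orth} to annihilate the $L$- and $v(\alpha)$-components, and invoke Lemma~\ref{lem:pv-tv} for the convergence $t_{v(\alpha+s)}\to t_{v(\alpha)}$. The only cosmetic difference is that you differentiate $v(\alpha)$ and discard the $v(\alpha)$-component of $v'(\alpha)$ by orthogonality, whereas the paper writes $v(\alpha+s)=\bigl(\sqrt{1+\alpha^2\|d\|^2}\,v(\alpha)+s\,d\bigr)/\sqrt{1+(\alpha+s)^2\|d\|^2}$ and applies the orthogonality directly; these are the same computation.
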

\begin{proof}
The mean value theorem states that, when $s\in\mathbb{R}$ is close to zero,
\begin{align}
& E(p(v(\alpha+s)))-E(p(v(\alpha)))-\langle E'(p(v(\alpha))),p(v(\alpha+s))-p(v(\alpha))\rangle \nonumber\\
  &\qquad\qquad\qquad= \langle E'(\xi)-E'(p(v(\alpha))),p(v(\alpha+s))-p(v(\alpha))\rangle \nonumber\\
  &\qquad\qquad\qquad\leq \|E'(\xi)-E'(p(v(\alpha)))\|_{H^*}\|p(v(\alpha+s))-p(v(\alpha))\|, \label{eq:djpvs-eq1}
\end{align}
where $\xi = p(v(\alpha))+\theta(p(v(\alpha+s))-p(v(\alpha)))$ for some $\theta=\theta(\alpha)\in(0,1)$. By the continuity of $E'$, $p$ and $v(\alpha)$, we have 
\[ \|E'(\xi)-E'(p(v(\alpha)))\|_{H^*}\to0\quad \mbox{as} \quad s\to0. \] 
The local Lipschitz continuity of $p$ and Lemma~\ref{lem:vs} imply that the right-hand side of \eqref{eq:djpvs-eq1} is $o(\|p(v(\alpha+s))-p(v(\alpha))\|)=o(\|v(\alpha+s)-v(\alpha)\|)=o(s\|d\|)$, and therefore, by Lemma~\ref{lem:orth},
\begin{align*}
E(p(v(\alpha+s)))-E(p(v(\alpha)))
  &= \langle E'(p(v(\alpha))), p(v(\alpha+s))-p(v(\alpha))\rangle + o(s\|d\|) \\
  &= t_{v(\alpha+s)} \langle E'(p(v(\alpha))),v(\alpha+s)\rangle + o(s\|d\|) \\
  &= \frac{t_{v(\alpha+s)}s}{\sqrt{1+(\alpha+s)^2\|d\|^2}} \langle E'(p(v(\alpha))),d\rangle + o(s\|d\|).
\end{align*}
From Lemma~\ref{lem:pv-tv}, we obtain $t_{v(\alpha+s)}\to t_{v(\alpha)}$ as $s\to0$. It follows that
\[
 \lim_{s\to0}\frac{E(p(v(\alpha+s)))-E(p(v(\alpha)))}{s} 
 = \frac{t_{v(\alpha)}}{\sqrt{1+\alpha^2\|d\|^2}} \langle E'(p(v(\alpha))),d\rangle,\quad \alpha\in\mathbb{R}. 
\]
In other words, $E(p(v(\alpha)))$ is differentiable w.r.t. $\alpha$ and \eqref{eq:dEpvs} holds.
Finally, it is clear that the right-hand side of \eqref{eq:dEpvs} is continuous w.r.t. $\alpha$. The proof is completed.
\end{proof}

\begin{remark}
If $E$ possesses a higher regularity, say, $E\in C^1(H,\mathbb{R})$ and $E':H\to H^*$ is locally $\gamma_1$-H\"{o}lder continuous for some $\gamma_1\in(0,1]$, then the regularity assumption of $p$ in Lemma~\ref{lem:dEpvs} can be relaxed to that $p$ is locally $\gamma_2$-H\"{o}lder continuous around $v(\alpha)$ for some $\gamma_2\in(1/(\gamma_1+1),1]$. Actually, the key step in the proof is to justify that the right-hand side of \eqref{eq:djpvs-eq1} is $o(s\|d\|)$. The local H\"{o}lder continuity of $E'$ and $p$ implies that there exist two constants $C_1(\alpha),C_2(\alpha)>0$ s.t., when $s\to0$,
\begin{align*}
  &\|E'(\xi)-E'(p(v(\alpha)))\|_{H^*}\|p(v(\alpha+s))-p(v(\alpha))\| \\
  &\qquad\qquad\qquad\leq C_1(\alpha)\|\xi-p(v(\alpha))\|^{\gamma_1}\|p(v(\alpha+s))-p(v(\alpha))\| \\
  &\qquad\qquad\qquad= C_1(\alpha)\,\theta^{\gamma_1}\|p(v(\alpha+s))-p(v(\alpha))\|^{\gamma_1+1} \\
  &\qquad\qquad\qquad\leq C_1(\alpha)C_2(\alpha)\|v(\alpha+s)-v(\alpha)\|^{\gamma_2(\gamma_1+1)},
\end{align*}
where $\xi = p(v(\alpha))+\theta(p(v(\alpha+s))-p(v(\alpha)))$ and $\theta=\theta(\alpha)\in(0,1)$ are the same as in \eqref{eq:djpvs-eq1}. Consequently, the right-hand side of \eqref{eq:djpvs-eq1} is $o(s\|d\|)$.
\end{remark}

\subsection{Normalized Wolfe-Powell-type step-size search rules}
Now, we are ready to propose normalized Wolfe-Powell-type step-size search rules for general descent directions by constructing curvature conditions to not only prevent step-sizes from being too small, but also avoid excluding the local minimizer of the function $E(p(v(\alpha)))$ w.r.t. $\alpha$. In other words, the optimal step-size is always contained in the feasible step-size interval.

\begin{definition}[Descent direction]\label{def:descdir}
Let $p$ be a local peak selection w.r.t. $L$ at $v\in S_H\backslash L$. Assume that $E\in C^1(H,\mathbb{R})$ and $E'(p(v))\neq0$. A vector $d\in[L,v]^\bot$ is called a {\em descent direction} of $E$ w.r.t. $L$ at $p(v)$ if $\langle E'(p(v)),d\rangle<0$.
\end{definition}

\begin{remark}
It is noted that the above definition is slightly different from the usual definition of descent direction in optimization theory. In fact, the descent direction $d$ in Definition~\ref{def:descdir} is not only required to decrease the functional $E$ at $p(v)$ (i.e., $\langle E'(p(v)),d\rangle<0$) but also to satisfy the orthogonality condition $d\,\bot\,[L,v]$. We make some comments on the reasons for introducing the latter condition as follows.
\begin{enumerate}[(i)]
\item The descent direction $d$ in Definition~\ref{def:descdir} is for the outer-level minimization and should be relatively independent of the inner-level maximization. Intuitively, an iteration along a descent direction in $[L,v]^\bot$, i.e., a descent direction with zero component in $[L,v]$, tends to 
enhance the stability of the algorithm.
\item By Lemma~\ref{lem:orth}, the condition $d\in[L,v]^\bot$ is automatically satisfied for the steepest descent direction $d^{SD}=-\nabla E(p(v))$ at $p(v)$, i.e., the Riesz representer of $-E'(p(v))$ determined by $(d^{SD},\phi) = -\langle E'(p(v)),\phi\rangle$, $\forall \phi\in H$. Actually, this orthogonality plays a very important role in both algorithm implementation and theoretical analysis for the classical LMMs. Based on this observation, preserving this orthogonality to the general descent direction $d$ for the outer-level minimization is a preferred choice.
\end{enumerate}
\end{remark}

\begin{definition}[Normalized Wolfe-Powell-type step-size search rules]\label{def:wolfe}
Suppose $E\in C^1(H,\mathbb{R})$ and let $v\in S_H\backslash L$, $p$ be a peak selection of $E$ w.r.t. $L$, and $d\in[L, v]^\bot$ be a descent direction of $E$ w.r.t. $L$ at $p(v)$. Denote $p(u)=t_u u+w^L_u$ ($u\in S_H$) with $t_u\geq0$ and $w^L_u\in L$. For two given constants $\sigma_1$ and $\sigma_2$ with $0<\sigma_1<\sigma_2<1$, we say that the step-size $\alpha>0$ satisfies
\begin{itemize}
\item[$\bullet$] the {\em normalized Wolfe-Powell-type step-size search rule} at $v$, if there hold
\begin{subequations}\label{eq:wolfe}
\begin{align}
  &E(p(v(\alpha))) \leq E(p(v))+\sigma_1\alpha t_v\langle E'(p(v)),d\rangle, \label{eq:wolfe-1} \\
  &\hat{t}_{v(\alpha)}\langle E'(p(v(\alpha))),d\rangle \geq \sigma_2t_v\langle E'(p(v)),d\rangle, \label{eq:wolfe-2}
\end{align}
\end{subequations}
where $\hat{t}_{v(\alpha)}=t_{v(\alpha)}/\sqrt{1+\alpha^2\|d\|^2}$; 
\item[$\bullet$] the {\em normalized strong Wolfe-Powell-type step-size search rule} at $v$, if there hold
\begin{subequations}\label{eq:swolfe}
\begin{align}
  &E(p(v(\alpha))) \leq E(p(v))+\sigma_1\alpha t_v\langle E'(p(v)),d\rangle, \label{eq:swolfe-1} \\
  &\hat{t}_{v(\alpha)}\left|\langle E'(p(v(\alpha))),d\rangle\right| \leq -\sigma_2t_v\langle E'(p(v)),d\rangle.\label{eq:swolfe-2}
\end{align}
\end{subequations}
\end{itemize}
\end{definition}

The condition \eqref{eq:wolfe-1} or \eqref{eq:swolfe-1} is referred to as the sufficient decrease condition, while conditions \eqref{eq:wolfe-2} and \eqref{eq:swolfe-2} are referred to as curvature conditions. It is pointed out that, if the steepest descent direction is employed, \eqref{eq:wolfe-1} (or \eqref{eq:swolfe-1}) is equivalent to the normalized Armijo-type condition used in traditional LMMs \cite{LZ2002SISC,XYZ2012SISC,YZ2005SISC}.

The feasibility of normalized (strong) Wolfe-Powell-type step-size search rules above is provided as follows.

\begin{theorem}\label{thm:feasib}
Let $E\in C^1(H,\mathbb{R})$, $v\in S_H\backslash L$, $d\in[L,v]^\bot$ and $p$ be a peak selection of $E$ w.r.t. $L$. Denote $p(u)=t_uu+w^L_u$, $u\in S_H$, with $t_u\geq0$ and $w^L_u\in L$. Assume that (i) $p$ is locally Lipschitz continuous on the curve $\{v(\alpha):\alpha\geq0\}$; (ii) $t_v>0$; (iii) $d$ is a descent direction of $E$ w.r.t. $L$ at $p(v)$, i.e., $\langle E'(p(v)),d\rangle<0$; and (iv) $\inf_{\alpha>0}E(p(v(\alpha)))>-\infty$. Then, for given $\sigma_1$, $\sigma_2$ with $0<\sigma_1<\sigma_2<1$, there exist two positive constants $\bar{\alpha}_1$, $\bar{\alpha}_2$ with $\bar{\alpha}_1<\bar{\alpha}_2$ s.t., for any $\alpha\in(\bar{\alpha}_1,\bar{\alpha}_2)$, it satisfies the normalized strong Wolfe-Powell-type step-size search rule \eqref{eq:swolfe} and therefore the normalized Wolfe-Powell-type step-size search rule \eqref{eq:wolfe}.
\end{theorem}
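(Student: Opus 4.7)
The plan is to reduce the statement to a one-dimensional line search feasibility argument for $\phi(\alpha):=E(p(v(\alpha)))$, mimicking the classical Euclidean proof of Wolfe--Powell feasibility, with Lemma~\ref{lem:dEpvs} supplying the derivative formula. By assumption (i), $p$ is locally Lipschitz on $\{v(\alpha):\alpha\geq 0\}$, so Lemma~\ref{lem:dEpvs} gives $\phi\in C^1([0,\infty))$ with
\[
\phi'(\alpha)=\hat{t}_{v(\alpha)}\langle E'(p(v(\alpha))),d\rangle,\qquad \phi'(0)=t_v\langle E'(p(v)),d\rangle.
\]
Hypotheses (ii) and (iii) then yield $\phi'(0)<0$. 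I would next introduce the affine auxiliary function $\ell(\alpha):=\phi(0)+\sigma_1\alpha\phi'(0)$, so that \eqref{eq:wolfe-1} (equivalently \eqref{eq:swolfe-1}) reads simply $\phi(\alpha)\leq\ell(\alpha)$.

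The first crossing of $\phi$ and $\ell$ is the key geometric object. Since $\sigma_1\in(0,1)$ and $\phi'(0)<0$, the slope $\ell'(0)=\sigma_1\phi'(0)>\phi'(0)$, so $\phi(\alpha)<\ell(\alpha)$ for all sufficiently small $\alpha>0$. On the other hand, $\ell(\alpha)\to-\infty$ as $\alpha\to\infty$, while assumption (iv) forces $\phi$ to stay bounded below; the intermediate value theorem applied to $\phi-\ell$ then produces a smallest $\alpha^*>0$ with $\phi(\alpha^*)=\ell(\alpha^*)$, and $\phi(\alpha)<\ell(\alpha)$ throughout $(0,\alpha^*)$. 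This secures the strict sufficient decrease condition \eqref{eq:swolfe-1} (and hence \eqref{eq:wolfe-1}) on all of $(0,\alpha^*)$.

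To obtain the curvature conditions, I would apply the mean value theorem on $[0,\alpha^*]$ to produce some $\tilde\alpha\in(0,\alpha^*)$ with
\[
\phi'(\tilde\alpha)=\frac{\phi(\alpha^*)-\phi(0)}{\alpha^*}=\frac{\ell(\alpha^*)-\ell(0)}{\alpha^*}=\sigma_1\phi'(0).
\]
Since $\sigma_1<\sigma_2$ and $\phi'(0)<0$, sign-chasing gives $\phi'(\tilde\alpha)=\sigma_1\phi'(0)>\sigma_2\phi'(0)$, which is exactly \eqref{eq:wolfe-2}, together with $|\phi'(\tilde\alpha)|=-\sigma_1\phi'(0)<-\sigma_2\phi'(0)$, which is the stronger \eqref{eq:swolfe-2}; both with strict inequality. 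The continuity of $\phi'$ supplied by Lemma~\ref{lem:dEpvs} then lets me pick an open interval $(\bar\alpha_1,\bar\alpha_2)\subset(0,\alpha^*)$ around $\tilde\alpha$ on which $|\phi'(\alpha)|<-\sigma_2\phi'(0)$ persists. Combined with the strict sufficient decrease already in hand on $(0,\alpha^*)$, every $\alpha\in(\bar\alpha_1,\bar\alpha_2)$ satisfies the normalized strong Wolfe--Powell rule \eqref{eq:swolfe}, and a fortiori the weaker \eqref{eq:wolfe}.

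The main technical input is Lemma~\ref{lem:dEpvs}, which does the heavy lifting of turning the problem into an elementary single-variable exercise. Beyond that, the only delicate moment is guaranteeing that the crossing point $\alpha^*$ actually exists, and this is precisely where hypothesis (iv) is indispensable, counterbalancing the unboundedness below of $\ell$. Everything else is routine continuity and sign tracking that exploits $0<\sigma_1<\sigma_2<1$ together with $\phi'(0)<0$.
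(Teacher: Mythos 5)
Your proposal is correct and follows essentially the same route as the paper's own proof: reduce to the scalar function $\varphi(\alpha)=E(p(v(\alpha)))$ via Lemma~\ref{lem:dEpvs}, locate the first crossing $\bar{\alpha}$ of $\varphi$ with the line $\varphi(0)+\sigma_1\alpha\varphi'(0)$ using hypothesis (iv), apply the mean value theorem to get $\tilde{\alpha}$ with $\varphi'(\tilde{\alpha})=\sigma_1\varphi'(0)$, and conclude by continuity of $\varphi'$. The argument and all key steps match; no gaps.
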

\begin{proof}
Since \eqref{eq:swolfe} yields \eqref{eq:wolfe}, we only need to verify that there exists an interval $(\bar{\alpha}_1,\bar{\alpha}_2)$ s.t. \eqref{eq:swolfe} holds for all $\alpha\in(\bar{\alpha}_1,\bar{\alpha}_2)$. Set $\varphi(\alpha):=E(p(v(\alpha)))$, $\alpha\geq0$ and $\varphi(0)=E(p(v))$. In view of Lemma~\ref{lem:pv-tv} and Lemma~\ref{lem:dEpvs}, we have $\varphi\in C^1([0,\infty),\mathbb{R})$ and $\varphi'(\alpha)=\hat{t}_{v(\alpha)}\langle E'(p(v(\alpha))),d\rangle$ with $\hat{t}_{v(\alpha)}=t_{v(\alpha)}/\sqrt{1+\alpha^2\|d\|^2}$. Note that conditions (ii) and (iii) imply $\varphi'(0)=t_v\langle E'(p(v)),d\rangle<0$. Hence, \eqref{eq:swolfe} can be rewritten as
\begin{equation}\label{eq:wp-in-phi}
  \varphi(\alpha) \leq \varphi(0) + \sigma_1\alpha\varphi'(0),\quad
  |\varphi'(\alpha)| \leq -\sigma_2\varphi'(0),\quad \alpha\geq0,
\end{equation}
and, for all $\alpha>0$ small enough, $\varphi(\alpha)<\varphi(0)+\sigma_1\alpha\varphi'(0)$ holds. In addition, since $\varphi(0)+\sigma_1\alpha\varphi'(0)\to-\infty$ as $\alpha\to+\infty$ and the condition (iv) states that $\varphi(\alpha)$ is bounded from below for all $\alpha>0$, apparently $\varphi(\alpha)>\varphi(0)+\sigma_1\alpha\varphi'(0)$ holds for all $\alpha>0$ large enough. Consequently, the equation
\begin{equation}\label{eq:phialeq}
  \varphi(\alpha)=\varphi(0)+\sigma_1\alpha\varphi'(0), \alpha>0,
\end{equation}
admits at least one positive solution. Let $\bar{\alpha}>0$ be the smallest positive solution to the equation \eqref{eq:phialeq}. Then, it implies that
\begin{equation}\label{eq:phi-ineq1}
  \varphi(\alpha)<\varphi(0)+\sigma_1\alpha\varphi'(0),\quad \forall \alpha\in(0,\bar{\alpha}).
\end{equation}
From the mean value theorem, there exists $\tilde{\alpha}\in(0,\bar{\alpha})$ s.t. $\varphi(\bar{\alpha})-\varphi(0)=\varphi'(\tilde{\alpha})\bar{\alpha}$, which leads to $\varphi'(\tilde{\alpha})=\sigma_1\varphi'(0)$ by the definition of $\bar{\alpha}$. The facts that $0<\sigma_1<\sigma_2<1$ and $\varphi'(0)<0$ imply that $\sigma_2\varphi'(0)<\sigma_1\varphi'(0)=\varphi'(\tilde{\alpha})<0$. Therefore, $|\varphi'(\tilde{\alpha})|<-\sigma_2\varphi'(0)$. By the continuity of $\varphi'(\alpha)$, there exists $0<\delta<\min\{\tilde{\alpha},\bar{\alpha}-\tilde{\alpha}\}$ s.t.
\begin{equation}\label{eq:phi-ineq2}
  |\varphi'(\alpha)|<-\sigma_2\varphi'(0),\quad \forall \alpha\in(\tilde{\alpha}-\delta,\tilde{\alpha}+\delta)\subset(0,\bar{\alpha}).
\end{equation}
Setting $\bar{\alpha}_1=\tilde{\alpha}-\delta$ and $\bar{\alpha}_2=\tilde{\alpha}+\delta$, the combination of \eqref{eq:phi-ineq1}-\eqref{eq:phi-ineq2} states that \eqref{eq:wp-in-phi} holds for all $\alpha\in(\bar{\alpha}_1,\bar{\alpha}_2)$. 
\end{proof}

\begin{remark}
Fig.~\ref{fig:strongWolfeRule} provides a geometric interpretation of the feasibility of the normalized (strong) Wolfe-Powell-type step-size search rule. 
\end{remark}

\begin{figure}[!ht]
\centering
\includegraphics[width=.65\textwidth]{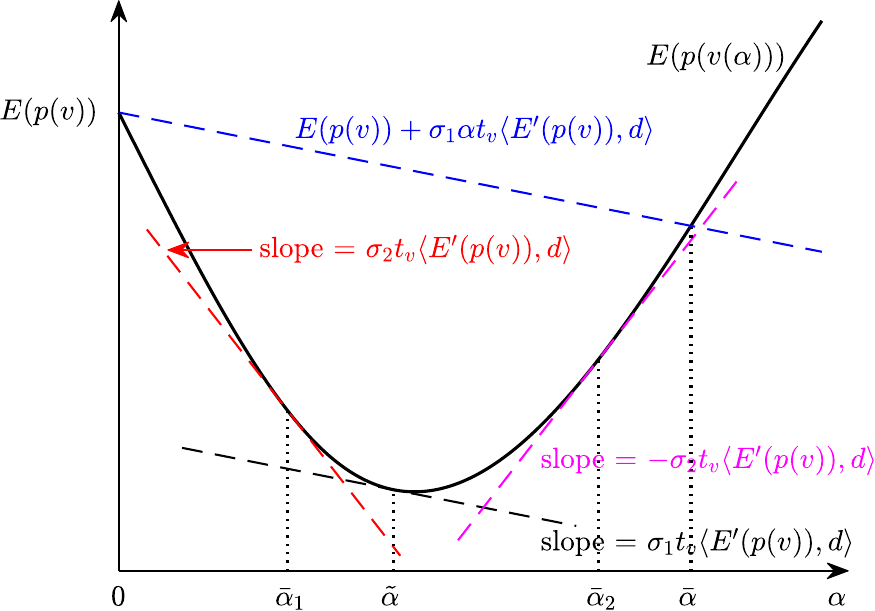}
\caption{Illustration of the normalized (strong) Wolfe-Powell-type step-size search rule: the acceptable interval of the Wolfe-Powell-type step-size is $[\bar{\alpha}_1, \bar{\alpha}]$, while the acceptable interval of the strong Wolfe-Powell-type step-size is $[\bar{\alpha}_1, \bar{\alpha}_2]$. Here, only the case $\bar{\alpha}_2\leq\bar{\alpha}$ is shown in the figure; for the case $\bar{\alpha}_2>\bar{\alpha}$, we redefine $\bar{\alpha}_2=\bar{\alpha}$.}
\label{fig:strongWolfeRule}
\end{figure}

\subsection{Normalized Wolfe-Powell-type local minimax algorithm}

Following the idea of traditional LMMs \cite{LZ2001SISC,LZ2002SISC,XYZ2012SISC}, the vital steps of the NWP-LMM algorithm are described in Algorithm~\ref{alg:wplmm}.

\begin{algorithm}\label{alg:wplmm}
{\rm\bfseries Normalized Wolfe-Powell-type Local Minimax Algorithm.}
\begin{enumerate}[\bf Step 1.]
  \item Take constants $\varepsilon>0$, $\sigma_1,\sigma_2$ with $0<\sigma_1<\sigma_2<1$, and $n-1$ previously found critical points $u_1,u_2,\ldots,u_{n-1}$ of $E$ where $u_{n-1}$ is the one with  the highest critical value in $\{u_i\}$ ($1\leq i\leq n-1$). Set $L={\rm span}\{u_1,u_2,\ldots,u_{n-1}\}$, let $k:=0$, and choose an initial ascent direction $v_0=v_0^L+v_0^\bot\in S_H$ at $u_{n-1}$ with $v_0^L\in L$, $v_0^\bot\in L^\bot$ and $v_0^\bot\neq0$. With an initial guess $w=v_0+u_{n-1}$, solve for
      \[ w_0=\arg\max_{w\in[L,v_0]}E(w), \]
      and denote $w_0=p(v_0)=t_0v_0+w_0^L$, where $t_0\geq0$ and $w_0^L\in L$.
  \item Compute a descent direction $d_k\in[L, v_k]^\bot$ of $E$ w.r.t. $L$ at $w_k=p(v_k)$ s.t. $\langle E'(w_k),d_k\rangle<0$, which will be discussed in section~\ref{sec:dk}.
  \item If the {\em stopping criterion} $\|E'(w_k)\|_{H^*}<\varepsilon$ is satisfied (or more criteria are satisfied if necessary), then output $u_n=w_k$ and stop; otherwise, go to {\bf Step~4}.
  \item Set $v_k(\alpha) =\frac{v_k+\alpha d_k}{\|v_k+\alpha d_k\|}$ and find a step-size $\alpha_k>0$ satisfying the normalized Wolfe-Powell-type step-size search rule, i.e.,  
  \begin{subequations}\label{eq:wp-k}
  \begin{align}
    &E(p(v_k(\alpha_k))) \leq E(w_k)+\sigma_1\alpha_kt_k\langle E'(w_k),d_k\rangle, \label{eq:wp-k-1}  \\
    &\hat{t}_k(\alpha_k)\langle E'(p(v_k(\alpha_k))),d_k\rangle \geq \sigma_2t_k\langle E'(w_k),d_k\rangle, \label{eq:wp-k-2}
  \end{align}
  \end{subequations}
  or the normalized strong Wolfe-Powell-type step-size search rule, i.e.,
  \begin{subequations}\label{eq:swp-k}
  \begin{align}
    &E(p(v_k(\alpha_k))) \leq E(w_k)+\sigma_1\alpha_kt_k\langle E'(w_k),d_k\rangle, \label{eq:swp-k-1}  \\
    &\hat{t}_k(\alpha_k)\big|\langle E'(p(v_k(\alpha_k))),d_k\rangle\big| \leq -\sigma_2t_k\langle E'(w_k),d_k\rangle, \label{eq:swp-k-2}
  \end{align}
  \end{subequations}
  where $p(v_k(\alpha_k))=t_k(\alpha_k)v_k(\alpha_k)+w_k^L(\alpha_k)$ with $t_k(\alpha_k)\geq0$ and $w_k^L(\alpha_k)\in L$ is the local maximizer of $E$ on $[L, v_k(\alpha_k)]$ computed by utilizing $w=t_k v_k(\alpha_k)+w_k^L$ as an initial guess and $\hat{t}_k(\alpha_k)=t_k(\alpha_k)/\sqrt{1+\alpha_k^2\|d_k\|^2}$.
  \item Set $v_{k+1}=v_k(\alpha_k)$, $t_{k+1}=t_k(\alpha_k)$, $w_{k+1}^L=w_k^L(\alpha_k)$ and $w_{k+1}=p(v_{k+1})=t_{k+1}v_{k+1}+w_{k+1}^L$. Then, update $k:=k+1$ and go to {\bf Step~2}.
\end{enumerate}
\end{algorithm}

We remark here that, similar to the classical (strong) Wolfe-Powell line search algorithm in optimization theory (see, e.g., \cite{Fletcher1987,Nocedal2006,SunYuan2006}), one can employ an interpolation approach to efficiently find a step-size $\alpha_k>0$ satisfying the normalized Wolfe-Powell-type step-size search rule \eqref{eq:wp-k} or the normalized strong Wolfe-Powell-type step-size search rule \eqref{eq:swp-k} in Step~4 of Algorithm~\ref{alg:wplmm}. The implementation details are skipped here for brevity.

\section{Global convergence}\label{sec:gcvg}
In this section, we establish the global convergence of Algorithm~\ref{alg:wplmm}. In order to hit this goal, the following concept of compactness is needed. We simply utilize the same notations as those in Algorithm~\ref{alg:wplmm} throughout this section.
\begin{definition}[\cite{Rabinowitz1986}]
A functional $E\in C^1(H,\mathbb{R})$ is said to satisfy the Palais-Smale (PS) condition if every sequence $\{w_n\}\subset H$ s.t. $\{E(w_n)\}$ is bounded and $E'(w_n)\to0$ in $H^*$ has a convergent subsequence.
\end{definition}

It is pointed out that the following lemma gives a significant behavior of the sequence generated by Algorithm~\ref{alg:wplmm}, which does not depend on the choice of descent directions and step-sizes in the algorithm. The proof is similar to that of Lemma~2.3 in \cite{XYZ2012SISC} and omitted here for brevity.
\begin{lemma}\label{lem:mono}
Let $\{v_k\}$ be a sequence generated by Algorithm~\ref{alg:wplmm} with $v_0\in S\backslash L$. Denote $v_k=v_k^\bot+v_k^L$ with $v_k^\bot\in L^\bot$ and $v_k^L\in L$, $k=0,1,\ldots$,  then there hold $\|v_0^\bot\|\leq\|v_k^\bot\|\leq1$ and $v_k^L=\tau_kv_0^L$ with $0<\tau_{k+1}\leq\tau_k\leq1$ for  $k=0,1,\ldots$.
\end{lemma}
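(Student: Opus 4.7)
The plan is to proceed by induction on $k$, exploiting the orthogonality condition $d_k\in[L,v_k]^\bot$ that is imposed in Step~2 of Algorithm~\ref{alg:wplmm}. The first observation is that $[L,v_k]^\bot\subset L^\bot$ (since $L\subset[L,v_k]$), so each descent direction $d_k$ automatically lies in $L^\bot$. In addition, because $v_k\in[L,v_k]$, we also have $(d_k,v_k)=0$. These two facts are the only structural ingredients I need.

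Using $d_k\in L^\bot$, the orthogonal decomposition of $v_k+\alpha_k d_k$ with respect to $L\oplus L^\bot$ reads
\begin{equation*}
  v_k+\alpha_k d_k \;=\; v_k^L \;+\; (v_k^\bot+\alpha_k d_k),
\end{equation*}
with the first summand in $L$ and the second in $L^\bot$. Dividing by the norm and identifying components gives
\begin{equation*}
  v_{k+1}^L \;=\; \frac{v_k^L}{\|v_k+\alpha_k d_k\|}, \qquad
  v_{k+1}^\bot \;=\; \frac{v_k^\bot+\alpha_k d_k}{\|v_k+\alpha_k d_k\|}.
\end{equation*}
Using $(d_k,v_k)=0$, the Pythagorean identity yields $\|v_k+\alpha_k d_k\|^2=1+\alpha_k^2\|d_k\|^2\geq 1$, with equality only if $\alpha_k d_k=0$.

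From the formula for $v_{k+1}^L$, I would define $\tau_k$ via $\tau_0:=1$ and $\tau_{k+1}:=\tau_k/\|v_k+\alpha_k d_k\|$. An easy induction (starting from $v_0^L=\tau_0 v_0^L$) shows $v_k^L=\tau_k v_0^L$ for every $k$, together with $0<\tau_{k+1}\leq\tau_k\leq1$ since the denominator is $\geq 1$ and positive. Consequently $\|v_k^L\|=\tau_k\|v_0^L\|$ is non-increasing in $k$, and combining this with $\|v_k^L\|^2+\|v_k^\bot\|^2=1$ immediately gives $\|v_0^\bot\|\leq\|v_k^\bot\|\leq 1$.

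There is no real obstacle here; the argument is essentially bookkeeping once one recognizes that the orthogonality $d_k\in[L,v_k]^\bot$ simultaneously freezes the $L$-component of the unnormalized iterate and forces $\|v_k+\alpha_k d_k\|\geq 1$. The only point to double-check is that $v_0\in S_H\setminus L$ ensures $v_0^L\neq v_0$ and $v_0^\bot\neq 0$, which anchors the induction and guarantees $\tau_k>0$ throughout.
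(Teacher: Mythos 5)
Your proof is correct and follows essentially the same route the paper intends: the paper omits the proof (citing Lemma~2.3 of \cite{XYZ2012SISC}), but its own orthogonal decomposition in \eqref{eq:vs-expr} — $v^L(\alpha)=v^L/\sqrt{1+\alpha^2\|d\|^2}$, $v^\bot(\alpha)=(v^\bot+\alpha d)/\sqrt{1+\alpha^2\|d\|^2}$ — is exactly your one-step identity, and iterating it with $\tau_{k+1}=\tau_k/\sqrt{1+\alpha_k^2\|d_k\|^2}$ gives the claim. No gaps.
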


Lemma~\ref{lem:mono} states that once an initial ascent direction $v_0=v_0^L+v_0^\bot\in S\backslash L$ is chosen in Algorithm~\ref{alg:wplmm}, the closed subset 
\begin{equation}\label{eq:V0}
  \mathcal{V}_0:=\{v=v^\bot+\tau v_0^L\in S_H:v^\bot\in L^\bot,0\leq\tau\leq1\}\subset S_{[L^\bot,v_0^L]}\subset S_H\backslash L
\end{equation}
contains all possible vectors $v_k$ that Algorithm~\ref{alg:wplmm} may generate. Thus, the domain of a peak selection can be limited to be $\mathcal{V}_0$ instead of $S_H$.

We remark here that, in general, the local peak selection $p(v)$ defined on $\mathcal{V}_0$ is no longer a homeomorphism. The following weak version related to the homeomorphism property of the local peak selection $p(v)$ plays a significant role for establishing the global convergence. It can be verified by an analogous argument to that of Theorem~2.1 in \cite{XYZ2012SISC} and only the continuity of the peak selection $p(v)$ on $\mathcal{V}_0$ is sufficient. Consequently, we skip the proof for simplicity.
\begin{lemma}\label{lem:home}
 Suppose $E\in C^1(H,\mathbb{R})$ and let $p$ be a peak selection of $E$ w.r.t. $L$ satisfying {\rm(i)} $p$ is continuous on $\mathcal{V}_0$ and {\rm(ii)} $t_k\geq\delta$ for some $\delta>0$, $\forall\,k=0,1,\ldots$. If the sequence $\{w_k\}$ generated by Algorithm~\ref{alg:wplmm} contains a subsequence $\{w_{k_i}\}$ converging to some $u_*\in H$, then the corresponding subsequence $\{v_{k_i}\}$ converges to some $v_*\in\mathcal{V}_0$ with $u_*=p(v_*)$.
\end{lemma}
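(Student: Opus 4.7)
The plan is to extract convergence of $\{v_{k_i}\}$ by reading it off from the orthogonal decomposition of $w_{k_i}=p(v_{k_i})$, after reducing to the finite-dimensional subspace $L\oplus\mathrm{span}\{v_0^L\}$ where $\tau_{k_i}$ lives. Throughout I would use Lemma~\ref{lem:mono} to write $v_{k_i}=v_{k_i}^\bot+\tau_{k_i}v_0^L$ with $v_{k_i}^\bot\in L^\bot$ and $\tau_{k_i}\in(0,1]$ monotonically non-increasing; consequently $\tau_{k_i}\to\tau_*\in[0,1]$, and from $\|v_{k_i}\|=1$ one gets $\|v_{k_i}^\bot\|^2=1-\tau_{k_i}^2\|v_0^L\|^2\to 1-\tau_*^2\|v_0^L\|^2=:c_*^2$. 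Crucially, Lemma~\ref{lem:mono} gives $\|v_{k_i}^\bot\|\geq\|v_0^\bot\|>0$, so $c_*\geq\|v_0^\bot\|>0$.

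Next I would exploit the orthogonal splitting $w_{k_i}=t_{k_i}v_{k_i}^\bot+(t_{k_i}\tau_{k_i}v_0^L+w_{k_i}^L)$, where the first summand lies in $L^\bot$ and the second in $L$. Writing $u_*=u_*^\bot+u_*^L$ accordingly, the assumed convergence $w_{k_i}\to u_*$ and continuity of the orthogonal projections onto $L^\bot$ and $L$ yield
\begin{equation*}
t_{k_i}v_{k_i}^\bot\longrightarrow u_*^\bot,\qquad t_{k_i}\tau_{k_i}v_0^L+w_{k_i}^L\longrightarrow u_*^L.
\end{equation*}
Taking norms on the first relation and using $\|v_{k_i}^\bot\|\to c_*>0$ forces $t_{k_i}\to t_*:=\|u_*^\bot\|/c_*$, and the lower bound assumption $t_{k_i}\geq\delta>0$ guarantees $t_*\geq\delta>0$ (in particular $u_*^\bot\ne 0$, so $u_*\notin L$, which is consistent with $u_*$ being a peak-selection value). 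Dividing $t_{k_i}v_{k_i}^\bot\to u_*^\bot$ by the now-nonzero scalars $t_{k_i}$ gives $v_{k_i}^\bot\to u_*^\bot/t_*$; combined with $\tau_{k_i}v_0^L\to\tau_*v_0^L$ this yields
\begin{equation*}
v_{k_i}\;\longrightarrow\; v_*:=\frac{u_*^\bot}{t_*}+\tau_*v_0^L.
\end{equation*}
Since $\|v_{k_i}\|=1$, continuity of the norm gives $\|v_*\|=1$, and by construction $v_*\in\mathcal{V}_0$ (the $L^\bot$-part is in $L^\bot$, and $\tau_*\in[0,1]$). Finally, continuity of $p$ on $\mathcal{V}_0$ (hypothesis (i)) yields $p(v_{k_i})\to p(v_*)$, and comparing with $p(v_{k_i})=w_{k_i}\to u_*$ gives $u_*=p(v_*)$.

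The only delicate step I anticipate is ensuring that $v_{k_i}^\bot$, and then $v_{k_i}$ itself, converge along the \emph{entire} subsequence $\{k_i\}$ rather than merely along a further sub-subsequence. This is what Lemma~\ref{lem:mono} and hypothesis (ii) together buy us: $\tau_{k_i}$ is monotone so converges outright, $\|v_{k_i}^\bot\|^2=1-\tau_{k_i}^2\|v_0^L\|^2$ is therefore determined, and the uniform lower bounds $\|v_{k_i}^\bot\|\geq\|v_0^\bot\|>0$ and $t_{k_i}\geq\delta$ allow us to divide without passing to a further subsequence. No other ingredient beyond continuity of $p$ on $\mathcal{V}_0$ is used, matching the authors' remark that Theorem~2.1 of \cite{XYZ2012SISC} can be adapted here with only the continuity of $p$.
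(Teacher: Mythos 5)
Your proof is correct and is essentially the argument the authors omit (they defer to Theorem~2.1 of \cite{XYZ2012SISC}, noting only continuity of $p$ on $\mathcal{V}_0$ is needed): decompose $w_{k_i}=t_{k_i}v_{k_i}^\bot+(t_{k_i}\tau_{k_i}v_0^L+w_{k_i}^L)$ along $L^\bot\oplus L$, use the monotonicity of $\tau_k$ and the uniform lower bounds $\|v_{k_i}^\bot\|\geq\|v_0^\bot\|>0$ and $t_{k_i}\geq\delta$ from Lemma~\ref{lem:mono} and hypothesis (ii) to recover convergence of $t_{k_i}$ and $v_{k_i}^\bot$ along the whole subsequence, then conclude by continuity of $p$. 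Your care about avoiding a further sub-subsequence is exactly the right point to flag, and your argument handles it correctly.
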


Before proving the global convergence of Algorithm~\ref{alg:wplmm}, we give some assumptions on the general descent direction, which are quite reasonable and hold for many descent directions, especially for the steepest descent direction used in traditional LMMs.
\begin{enumerate}[\rm({A}1)]
  \item $\langle E'(w_k),d_k\rangle\leq -c_1\|E'(w_k)\|_{H^*}^2$ for some $c_1>0$, $\forall\, k=0,1,\ldots$;
  \item $\|d_k\|\leq c_2\|E'(w_k)\|_{H^*}$ for some $c_2>0$, $\forall\, k=0,1,\ldots$; 
  \item if $\{v_k\}$ contains a subsequence $\{v_{k_i}\}$ converging to some $\bar{v}$ with $E'(p(\bar{v}))\neq0$, then the corresponding descent direction subsequence $\{d_{k_i}\}$ converges.
\end{enumerate}
Here, the assumption (A1) serves as a strong descent condition, and the assumption (A2) requires the length of the descent direction $d_k$ to be controlled by that of the gradient. In addition, the assumption (A3) admits a certain weak version of the continuous dependency of the descent direction $d_k$ on $v_k$.

The global convergence of Algorithm~\ref{alg:wplmm} is as follows.

\begin{theorem}\label{thm:cvg-wplmm}
Suppose $E\in C^1(H,\mathbb{R})$ and let $p$ be a peak selection of $E$ w.r.t. $L$, $\{v_k\}$ and $\{w_k\}$ be sequences generated by Algorithm~\ref{alg:wplmm}. If 
{\rm(i)} $p$ is locally Lipschitz continuous on $\mathcal{V}_0$;
{\rm(ii)} $t_k\geq\delta$ for some $\delta>0$, $\forall\, k=0,1,\ldots$; 
{\rm(iii)} $\inf_{v\in\mathcal{V}_0}E(p(v))>-\infty$ and assumptions {\rm(A1)}-{\rm(A3)} hold, then
\begin{enumerate}[\rm(a)]
  \item $\sum\limits_{k=0}^{\infty}\alpha_k\|E'(w_k)\|_{H^*}^2<\infty$;
  \item every accumulation point of $\{w_k\}$ is a critical point not belonging to $L$; 
  \item $\liminf\limits_{k\to\infty}\|E'(w_k)\|_{H^*}=0$.
\end{enumerate}
Further, if the functional $E$ satisfies the (PS) condition, then
\begin{enumerate}[\rm(a)]
\setcounter{enumi}{3}
  \item $\{w_k\}$ contains a subsequence converging to a critical point $u_*\notin L$. In addition, if $u_*$ is isolated, then $w_k\to u_*$ as $k\to\infty$.
\end{enumerate}
\end{theorem}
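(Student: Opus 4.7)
The plan is to establish the four conclusions in the natural order (a)$\to$(b)$\to$(c)$\to$(d), so that each step can draw on the previous ones. Throughout, write $g_k = E'(w_k)$ and use the closed set $\mathcal{V}_0\subset S_H\setminus L$ from Lemma~\ref{lem:mono}, which contains every iterate $v_k$.

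For (a), I combine the Wolfe-Powell sufficient decrease condition \eqref{eq:wp-k-1} with the strong descent assumption (A1) and the lower bound $t_k\geq\delta$ to get $E(w_{k+1})-E(w_k)\leq-\sigma_1 c_1 \delta\,\alpha_k\|g_k\|_{H^*}^2$. Telescoping and invoking $\inf_{v\in\mathcal{V}_0}E(p(v))>-\infty$ yields the summability in (a).

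For (b), let $\{w_{k_i}\}$ be a convergent subsequence with limit $u_*$. Lemma~\ref{lem:home} (applicable under the continuity of $p$ and $t_k\geq\delta$) gives $v_{k_i}\to v_*\in\mathcal{V}_0$ with $p(v_*)=u_*$. I argue $E'(u_*)=0$ by contradiction: if $E'(p(v_*))\neq 0$, then (A3) furnishes a limit $d_*$ for $\{d_{k_i}\}$. From (a), $\alpha_{k_i}\|g_{k_i}\|_{H^*}^2\to 0$, and since $\|g_{k_i}\|_{H^*}\to\|E'(u_*)\|_{H^*}>0$, I conclude $\alpha_{k_i}\to 0$. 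Lemma~\ref{lem:vs} then yields $\|v_{k_i+1}-v_{k_i}\|\leq\alpha_{k_i}\|d_{k_i}\|\to 0$, so $v_{k_i+1}\to v_*$ and, by continuity of $p$, $w_{k_i+1}\to u_*$. Passing to the limit in the curvature condition \eqref{eq:wp-k-2}, using Lemma~\ref{lem:pv-tv} to get $t_{k_i},t_{k_i+1}\to t_{v_*}$ and the identity $\sqrt{1+\alpha_{k_i}^2\|d_{k_i}\|^2}\to 1$, yields
$t_{v_*}\langle E'(u_*),d_*\rangle\geq\sigma_2\,t_{v_*}\langle E'(u_*),d_*\rangle$.
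Since $t_{v_*}\geq\delta>0$ and $\sigma_2<1$, this forces $\langle E'(u_*),d_*\rangle\geq 0$, while (A1) in the limit gives $\langle E'(u_*),d_*\rangle\leq-c_1\|E'(u_*)\|_{H^*}^2<0$, a contradiction. Finally $u_*\notin L$ since $u_*=t_{v_*}v_*+w^L_{v_*}$ with $v_*\in\mathcal{V}_0\subset S_H\setminus L$ and $t_{v_*}\geq\delta>0$.

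For (c), I argue by contradiction: assume $\|g_k\|_{H^*}\geq\epsilon>0$ for all $k\geq K$. By (a) this forces $\sum_{k\geq K}\alpha_k<\infty$ and in particular $\alpha_k\to 0$. Rearranging the curvature condition gives $\hat t_{k+1}\langle g_{k+1},d_k\rangle-t_k\langle g_k,d_k\rangle\geq(1-\sigma_2)c_1\delta\epsilon^2>0$ for $k\geq K$. Using (A2) to control $\|d_k\|$ by $\|g_k\|_{H^*}$, the local Lipschitz continuity of $p$ to obtain $\|w_{k+1}-w_k\|\leq C\alpha_k\|d_k\|\to 0$, and the continuity of $E'$, I bound the left-hand side by $o(1)$, yielding a contradiction. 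For (d), under the PS condition, (c) provides a subsequence with $\|g_{k_i}\|_{H^*}\to 0$ and bounded $\{E(w_{k_i})\}$, so PS extracts a further convergent subsequence whose limit $u_*$ satisfies $E'(u_*)=0$ and, by (b), lies outside $L$. When $u_*$ is isolated, the estimate $\|w_{k+1}-w_k\|\leq Cc_2\alpha_k\|g_k\|_{H^*}\to 0$ together with a standard trapping argument (using PS plus (b) to preclude other critical accumulation points in a small annulus around $u_*$) upgrades the subsequence convergence to $w_k\to u_*$.

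The main obstacle is step (b): carrying out the limit passage in the curvature condition rigorously, because one must simultaneously track three limits ($v_{k_i}\to v_*$, $d_{k_i}\to d_*$, $\alpha_{k_i}\to 0$) and keep the peak structure $p(v)=t_v v+w_v^L$ consistent. This is precisely why Lemmas~\ref{lem:pv-tv}, \ref{lem:vs}, \ref{lem:home} and assumption (A3) are all required together.
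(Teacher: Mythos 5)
Your parts (a) and (b) follow the paper's proof essentially verbatim: the telescoping of the sufficient decrease condition for (a), and for (b) the combination of Lemma~\ref{lem:home}, the deduction $\alpha_{k_i}\to0$ from (a), assumption (A3), Lemma~\ref{lem:pv-tv}, and the passage to the limit in the curvature condition to reach $(1-\sigma_2)t_{v_*}\langle E'(u_*),d_*\rangle\geq0$ against (A1), are exactly the paper's steps. The first half of (d) also matches (the paper likewise defers the isolated-point refinement to the argument of Theorem~2.4 in \cite{Z2017CAMC}).

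Part (c) is where you depart from the paper, and your route has a genuine gap. You derive $\hat t_{k+1}\langle g_{k+1},d_k\rangle-t_k\langle g_k,d_k\rangle\geq(1-\sigma_2)c_1\delta\epsilon^2>0$ and then claim the left-hand side is $o(1)$ from $\|w_{k+1}-w_k\|\to0$ and ``continuity of $E'$.'' This does not go through: without a convergent (sub)sequence there is no compact set on which $E'$ is uniformly continuous, so $\|w_{k+1}-w_k\|\to0$ does not imply $\|E'(w_{k+1})-E'(w_k)\|_{H^*}\to0$; moreover the decomposition of the left-hand side involves $\langle g_{k+1},d_k\rangle$ and the increments $\hat t_{k+1}-t_k$, and neither $\|g_k\|_{H^*}$ nor $t_k$ is known to be bounded above, while Lemma~\ref{lem:pv-tv} gives only pointwise (not uniform) continuity of $v\mapsto t_v$. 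Even your preliminary step $\|w_{k+1}-w_k\|\leq C\alpha_k\|d_k\|\to0$ already needs $\|g_k\|_{H^*}$ bounded. The paper's argument for (c) avoids all of this: under the contradiction hypothesis $\|E'(w_k)\|_{H^*}\geq\delta_1/2$, conclusion (a) gives $\sum_k\alpha_k\|E'(w_k)\|_{H^*}<\infty$ (divide out one factor of the lower bound), and then Lemma~\ref{lem:vs} with (A2) gives $\|v_{k+1}-v_k\|\leq c_2\alpha_k\|E'(w_k)\|_{H^*}$, which is summable. Hence $\{v_k\}$ is Cauchy in the closed set $\mathcal{V}_0$, $v_k\to\bar v$, $w_k\to p(\bar v)$, and $\|E'(p(\bar v))\|_{H^*}=\delta_1>0$ contradicts conclusion (b) applied to the accumulation point $p(\bar v)$. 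You should replace your curvature-based argument for (c) by this Cauchy-sequence argument; you already have every ingredient it needs. (A smaller version of the same issue appears in your sketch for the isolated case in (d): $\alpha_k\|g_k\|_{H^*}\to0$ does not follow from (a) alone unless $\alpha_k$ is bounded.)
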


\begin{proof}
The decreasing condition \eqref{eq:wp-k-1} (or \eqref{eq:swp-k-1}), condition (ii) and assumption (A1) say that, for $k=0,1,\ldots$,
\begin{equation}\label{eq:cvg-JJsdc}
  E(w_{k+1}) - E(w_k) \leq\sigma_1\alpha_kt_k\langle E'(w_k),d_k\rangle\leq -\sigma_1\delta c_1 \alpha_k\|E'(w_k)\|_{H^*}^2.
\end{equation}
Therefore, the sequence $\{E(w_k)\}$ is monotonically non-increasing. In addition, since the condition (iii) guarantees that $E(w_k)$  is bounded from below for all $k=0,1,\ldots$, the sequence $\{E(w_k)\}$ converges to some $E_{\infty}:=\inf_{k\geq0}E(w_k)$.

Then, adding up \eqref{eq:cvg-JJsdc}, we can arrive at
\begin{equation}\label{eq:cvg-sumJJ}
  \sum\limits_{k=0}^\infty \big(E(w_{k+1}) - E(w_k)\big) \leq -\sigma_1\delta c_1 \sum\limits_{k=0}^\infty \alpha_k\|E'(w_k)\|_{H^*}^2.
\end{equation}
Hence, the left-hand side of \eqref{eq:cvg-sumJJ} converges to $E_{\infty}-E(w_0)$ which is finite. This immediately leads to the conclusion (a).

To prove the conclusion (b), let $\bar{u}\in H$ be an accumulation point of the sequence $\{w_k\}$. Then, there exists a subsequence $\{w_{k_i}\}$ converging to $\bar{u}$ as $i\to\infty$. Recalling the weak version of the homeomorphism property in Lemma~\ref{lem:home}, it leads to that the corresponding subsequence $\{v_{k_i}\}$ converges to some $\bar{v}\in\mathcal{V}_0$ satisfying $\bar{u}=p(\bar{v})=t_{\bar{v}}\bar{v}+w_{\bar{v}}^L$. Lemma~\ref{lem:mono} and the condition (ii) yield that, for some $\delta>0$,
\[
  \mathrm{dist}(\bar{u},L)
  = \lim_{i\to\infty}\mathrm{dist}(w_{k_i},L)
  = \lim_{i\to\infty}t_{k_i}\|v_{k_i}^\bot\|
  \geq\delta\|v_0^\bot\|>0,
\]
and therefore $\bar{u}\notin L$.

The following is to verify that $\bar{u}$ is a critical point by taking full advantages of the curvature condition \eqref{eq:wp-k-2} (or \eqref{eq:swp-k-2}), 
which states that
\begin{equation}\label{eq:recall-wpk2}
  \frac{t_{k_i+1}}{\sqrt{1+\alpha_{k_i}^2\|d_{k_i}\|^2}} \langle E'(w_{k_i+1}),d_{k_i}\rangle
  \geq \sigma_2t_{k_i}\langle E'(w_{k_i}),d_{k_i}\rangle, \quad i=0,1,\ldots.
\end{equation}
By the contradiction argument, suppose that $\bar{u}$ is not a critical point, i.e., $E'(\bar{u})\neq0$. Since the functional $E\in C^1(H,\mathbb{R})$, one can obtain
\begin{equation}\label{eq:dEw->dEu}
  E'(w_{k_i})\to E'(\bar{u}),\quad \mbox{as }i\to\infty.
\end{equation}
As a result, for all $i$ large enough, $\|E'(w_{k_i})\|_{H^*}>\|E'(\bar{u})\|_{H^*}/2>0$. In view of the conclusion (a), it leads to
\begin{equation}\label{eq:aki->0}
  \alpha_{k_i}\to0,\quad \mbox{as }i\to\infty.
\end{equation}
In addition, the assumption (A3) states that there exists $\bar{d}\in[L,\bar{v}]^\bot$ s.t.
\begin{equation}\label{eq:dki->dbar}
  d_{k_i}\to\bar{d},\quad \mbox{as }i\to\infty.
\end{equation}
Therefore, \eqref{eq:aki->0} and \eqref{eq:dki->dbar} immediately indicate that
\begin{equation*}
  v_{k_i+1}=\frac{v_{k_i}+\alpha_{k_i}d_{k_i}}{\sqrt{1+\alpha_{k_i}^2\|d_{k_i}\|^2}}\to\bar{v}\quad\mbox{in }H,\quad \mbox{as }i\to\infty,
\end{equation*}
and 
\begin{equation}\label{eq:dEw1->dEu}
  E'(w_{k_i+1})=E'(p(v_{k_i+1}))\to E'(p(\bar{v}))= E'(\bar{u}),\quad \mbox{as }i\to\infty,
\end{equation}
holds by the continuity of $E'$ and $p$. 
Moreover, reviewing Lemma~\ref{lem:pv-tv} and the condition (ii), for some $\delta>0$, we have
\begin{equation}\label{eq:tki->tvbar}
  t_{k_i}\to t_{\bar{v}}\geq\delta>0 \;\mbox{ and }\; t_{k_i+1}\to t_{\bar{v}}\geq\delta>0,\quad \mbox{as }i\to\infty.
\end{equation}
Above all, combining \eqref{eq:dEw->dEu}-\eqref{eq:tki->tvbar} and taking $i\to\infty$ in \eqref{eq:recall-wpk2} imply
\begin{equation}\label{eq:tdEdsigma}
  t_{\bar{v}}\langle E'(\bar{u}),\bar{d}\,\rangle \geq \sigma_2t_{\bar{v}}\langle E'(\bar{u}),\bar{d}\,\rangle. 
\end{equation}
Since $E'(\bar{u})=E'(p(\bar{v}))\neq0$, it follows from the assumption (A1), \eqref{eq:dEw->dEu} and \eqref{eq:dki->dbar} that $\langle E'(\bar{u}),\bar{d}\,\rangle \leq -c_1\|E'(\bar{u})\|_{H^*}^2 < 0$ for some $c_1>0$. Thus, we have $t_{\bar{v}}\langle E'(\bar{u}),\bar{d}\,\rangle < 0$ and \eqref{eq:tdEdsigma} contradicts the fact $0<\sigma_2<1$. Consequently, the accumulation point $\bar{u}=p(\bar{v})$ is a critical point. The conclusion (b) is obtained.

Next, we prove the conclusion (c) by the contradiction argument. Suppose that
\[ \delta_1:=\liminf\limits_{k\to\infty}\|E'(w_k)\|_{H^*}>0. \]
Then, for $k$ large enough, $\|E'(w_k)\|_{H^*}\geq\delta_1/2>0$. Thus, the conclusion (a) admits 
\begin{equation}\label{eq:akgkcvg}
  \sum\limits_{k=0}^{\infty}\alpha_k\|E'(w_k)\|_{H^*}<\infty.
\end{equation}
In addition, Lemma~\ref{lem:vs} and the assumption (A2) yield that, for some $c_2>0$,
\begin{equation}\label{eq:vkakdk}
  \|v_{k+1}-v_k\|=\|v_k(\alpha_k)-v_k\|\leq \alpha_k\|d_k\|\leq c_2\alpha_k\|E'(w_k)\|_{H^*},\quad k=0,1,\ldots.
\end{equation}
Combining \eqref{eq:akgkcvg} and \eqref{eq:vkakdk} results in $\sum\limits_{k=0}^{\infty}\|v_{k+1}-v_k\|<\infty$. Therefore, $\{v_k\}$ is a Cauchy sequence in the closed subset $\mathcal{V}_0$. Immediately, the completeness of the closed subset $\mathcal{V}_0$ implies that there exists $\bar{v}\in\mathcal{V}_0$ s.t. $v_k\to\bar{v}$ as $k\to\infty$. Further, by the continuity of $p$ and $E'$, we have $w_k=p(v_k)\to p(\bar{v})$ with $p(\bar{v})$ the accumulation point, and $E'(w_k)\to E'(p(\bar{v}))$ as $k\to\infty$. Hence, there holds
\[ \|E'(p(\bar{v}))\|_{H^*}=\lim_{k\to\infty}\|E'(w_k)\|_{H^*}=\delta_1>0. \]
This is a contradiction to the conclusion (b). Thus, the conclusion (c) is proved.

The rest is to prove the conclusion (d). Since $\{E(w_k)\}$ converges to $E_{\infty}$ by the proof of the conclusion (a), according to the conclusion (c), there exists a subsequence $\{w_{k_i}\}$ s.t. $E(w_{k_i})\to E_{\infty}$ and $E'(w_{k_i})\to0$ as $i\to\infty$. By the (PS) condition, $\{w_{k_i}\}$ possesses a subsequence, still denoted by $\{w_{k_i}\}$, that converges to a critical point $u_*\in H$. In addition, according to the conclusion (b), $u_*\notin L$ holds. Finally, under the assumption that $u_*$ is isolated and following the analogous lines in the proof of Theorem 2.4 in \cite{Z2017CAMC} for the global convergence, the proof is completed.
\end{proof}

\section{Descent directions}\label{sec:dk}

In this section, we propose two specific types of descent directions for implementing Algorithm~\ref{alg:wplmm} in details. One is the PSD direction and the other is the CG-type descent direction. We use the same notations as those in Algorithm~\ref{alg:wplmm} for subsequent discussions in this section, unless specified.

\subsection{Preconditioned steepest descent direction}

The gradient of $E$ at $w_k=p(v_k)$, denoted by $g_k=\nabla E(w_k)$, is defined by
\begin{equation}\label{eq:grad-def}
  (g_k,\phi) = \langle E'(p(v_k)),\phi\rangle, \quad \forall \phi\in H.
\end{equation}
From the Riesz representation theorem, the gradient $g_k\in H$ exists uniquely and $\|g_k\|=\|E'(p(v_k))\|_{H^*}$. Consider the following PSD direction
\begin{equation}\label{eq:psdd}
  d_k=-T_kg_k,\quad k=0,1,\ldots,
\end{equation}
where $T_k=T(v_k)$, called a preconditioner at $v_k$, is a positive-definite and self-adjoint bounded linear operator on $H$ with an invariant subspace $[L, v_k]^\bot$. Assume that $T_k=T(v_k)$ satisfies 
\begin{enumerate}[\rm{(T}1{)}]
\item for some $c_3>0$, $\|T(v_k)u\|\leq c_3\|u\|$, $\forall u\in[L, v_k]^\bot$; 
\item for some $c_4>0$, $(T(v_k)u,u)\geq c_4\|u\|^2$, $\forall u\in[L, v_k]^\bot$; 
\item $T(v)$ is continuous at the accumulation point $\bar{v}$ of $\{v_k\}$ s.t. $E'(p(\bar{v}))\neq0$.
\end{enumerate}

The following theorem provides the global convergence result of Algorithm~\ref{alg:wplmm} with $d_k$ taken as the PSD direction \eqref{eq:psdd}.

\begin{theorem}\label{thm:cvg-psd}
Suppose $E\in C^1(H,\mathbb{R})$ and let $p$ be a peak selection of $E$ w.r.t. $L$, $\{v_k\}$ and $\{w_k\}$ be sequences generated by Algorithm~\ref{alg:wplmm} with $d_k=-T_kg_k$ the PSD direction defined in \eqref{eq:psdd} and $T_k=T(v_k)$ satisfying assumptions {\rm(T1)}-{\rm(T3)}. If 
{\rm(i)} $p$ is locally Lipschitz continuous on $\mathcal{V}_0$;
{\rm(ii)} $t_k\geq\delta$ for some $\delta>0$, $\forall\,k=0,1,\ldots$; and
{\rm(iii)} $\inf_{v\in \mathcal{V}_0}E(p(v))>-\infty$,
then those conclusions {\rm(a)}-{\rm(d)} in Theorem~\ref{thm:cvg-wplmm} hold.
\end{theorem}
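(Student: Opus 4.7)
The plan is to reduce Theorem~\ref{thm:cvg-psd} to Theorem~\ref{thm:cvg-wplmm} by verifying that the PSD direction $d_k=-T_k g_k$ satisfies the three abstract hypotheses (A1)--(A3) on descent directions, under the operator-level assumptions (T1)--(T3). Once this reduction is carried out, conclusions (a)--(d) follow immediately.

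First, I would record the admissibility of $d_k$ as a descent direction in the sense of Definition~\ref{def:descdir}. By Lemma~\ref{lem:orth}, $\langle E'(w_k),\phi\rangle=0$ for every $\phi\in[L,v_k]$, so the Riesz representation \eqref{eq:grad-def} forces $g_k\in[L,v_k]^\bot$. Since $[L,v_k]^\bot$ is an invariant subspace of $T_k$, we obtain $d_k=-T_kg_k\in[L,v_k]^\bot$. Using (T2),
\[ \langle E'(w_k),d_k\rangle = -(g_k,T_kg_k)\leq -c_4\|g_k\|^2 = -c_4\|E'(w_k)\|_{H^*}^2, \]
which simultaneously shows that $d_k$ is a descent direction when $E'(w_k)\neq0$ and establishes (A1) with $c_1=c_4$. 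Assumption (A2) follows at once from (T1):
\[ \|d_k\|=\|T_kg_k\|\leq c_3\|g_k\|=c_3\|E'(w_k)\|_{H^*}. \]

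The main point requiring a little care is (A3). Suppose $\{v_{k_i}\}$ is a subsequence of $\{v_k\}$ with $v_{k_i}\to\bar{v}\in\mathcal{V}_0$ and $E'(p(\bar{v}))\neq0$. Since $p$ is continuous on $\mathcal{V}_0$ and $E'\in C(H,H^*)$, we have $E'(w_{k_i})=E'(p(v_{k_i}))\to E'(p(\bar{v}))$ in $H^*$. The Riesz representation then gives $g_{k_i}\to\bar{g}$ in $H$, where $\bar{g}$ is the Riesz representer of $E'(p(\bar{v}))$. Invoking (T3), which provides continuity of $T(v)$ at $\bar{v}$ in the operator-norm sense, together with the uniform bound $\|T_{k_i}\|\leq c_3$ from (T1), I split
\[ \|T_{k_i}g_{k_i}-T(\bar{v})\bar{g}\| \leq \|T_{k_i}-T(\bar{v})\|\,\|g_{k_i}\| + \|T(\bar{v})\|\,\|g_{k_i}-\bar{g}\|, \]
and both terms tend to zero. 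Hence $d_{k_i}=-T_{k_i}g_{k_i}\to-T(\bar{v})\bar{g}$, which verifies (A3).

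With (A1)--(A3) in hand and under the standing hypotheses (i)--(iii) of Theorem~\ref{thm:cvg-psd}, Theorem~\ref{thm:cvg-wplmm} applies verbatim, yielding all four conclusions (a)--(d). The only subtlety is the interpretation of (T3); if one only assumed strong (pointwise) continuity of $T(\cdot)$, one would need an additional equicontinuity argument to pass to the limit in $T_{k_i}g_{k_i}$, but the operator-norm reading used above makes the step clean.
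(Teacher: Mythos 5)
Your proposal is correct and follows essentially the same route as the paper: show $d_k\in[L,v_k]^\bot$ via Lemma~\ref{lem:orth} and the invariance of $[L,v_k]^\bot$ under $T_k$, then verify (A1) from (T2), (A2) from (T1), and (A3) from (T3) together with the continuity of $E'$ and $p$, and invoke Theorem~\ref{thm:cvg-wplmm}. Your spelled-out limit argument for (A3) (and the remark on reading (T3) in the operator norm) only makes explicit what the paper leaves implicit.
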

\begin{proof}
Since $g_k\in[L, v_k]^\bot$ from Lemma~\ref{lem:orth} and $[L, v_k]^\bot$ is an invariant subspace of $T_k=T(v_k)$, we have $d_k=-T_kg_k\in[L, v_k]^\bot$. In order to prove the conclusion, it suffices to verify assumptions (A1)-(A3) of Theorem~\ref{thm:cvg-wplmm}. In fact, firstly, the assumption {\rm(T2)} states that
\begin{equation}
  \langle E'(w_k),d_k\rangle= -(g_k,T_kg_k)\leq -c_4\|g_k\|^2 =-c_4\|E'(w_k)\|_{H^*}^2,\quad k=0,1,\ldots,
\end{equation}
which is (A1) (with $c_4=c_1$). Further, by the assumption {\rm(T1)},
\begin{equation}
  \|d_k\|=\|T_kg_k\|\leq c_3\|g_k\|= c_3\|E'(w_k)\|_{H^*},\quad k=0,1,\ldots.
\end{equation}
Thus, (A2) is verified by taking $c_3=c_2$. Finally, (A3) directly follows from the assumption {\rm(T3)} and the continuity of $E'$ and $p$.
\end{proof}

Taking $T_k$ in Theorem~\ref{thm:cvg-psd} simply as the identity operator on $H$ yields the following corollary, which draws the global convergence of Algorithm~\ref{alg:wplmm} with the standard steepest descent direction utilized at each iterative step.

\begin{corollary}\label{thm:cvg-sd}
Suppose $E\in C^1(H,\mathbb{R})$ and let $p$ be a peak selection of $E$ w.r.t. $L$, $\{v_k\}$ and $\{w_k\}$ be sequences generated by Algorithm~\ref{alg:wplmm} with $d_k=-g_k$. If
{\rm(i)} $p$ is locally Lipschitz continuous on $\mathcal{V}_0$;
{\rm(ii)} $t_k\geq\delta$ for some $\delta>0$, $\forall\,k=0,1,\ldots$; and
{\rm(iii)} $\inf_{v\in \mathcal{V}_0}E(p(v))>-\infty$,
then those conclusions {\rm(a)}-{\rm(d)} in Theorem~\ref{thm:cvg-wplmm} hold.
\end{corollary}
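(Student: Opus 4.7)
The plan is to obtain this corollary as a direct specialization of Theorem~\ref{thm:cvg-psd} by taking the preconditioner $T_k$ to be the identity operator $I$ on $H$ for every $k$. Since $I$ is positive-definite, self-adjoint, bounded, and leaves every subspace invariant (in particular $[L,v_k]^\bot$), it is an admissible preconditioner in the sense required by \eqref{eq:psdd}, and $d_k=-Ig_k=-g_k$ is exactly the steepest descent direction in the statement.

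The main step is to verify assumptions (T1)--(T3) for $T_k=I$. For (T1), $\|Iu\|=\|u\|$ for all $u\in H$, so the constant $c_3=1$ works. For (T2), $(Iu,u)=\|u\|^2$ for all $u\in H$, so the constant $c_4=1$ works. For (T3), the constant map $v\mapsto I$ is continuous at every point, hence in particular at any accumulation point $\bar v$ of $\{v_k\}$ with $E'(p(\bar v))\neq 0$. All three assumptions hold trivially.

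With (T1)--(T3) established, the hypotheses of Theorem~\ref{thm:cvg-psd} are satisfied verbatim from hypotheses (i)--(iii) of the corollary, so the conclusion of Theorem~\ref{thm:cvg-psd}---namely the four statements (a)--(d) of Theorem~\ref{thm:cvg-wplmm}---transfers immediately. There is no genuine obstacle here: the corollary is essentially a bookkeeping remark asserting that the classical (unpreconditioned) steepest descent direction fits inside the PSD framework. The only thing worth noting in the write-up is that $g_k=\nabla E(w_k)$ lies in $[L,v_k]^\bot$ by Lemma~\ref{lem:orth}, so the iteration is well defined and stays within the orthogonality setting underlying the general convergence theory.
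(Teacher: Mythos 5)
Your proposal is correct and is exactly the paper's route: the corollary is obtained from Theorem~\ref{thm:cvg-psd} by taking $T_k$ to be the identity operator, for which (T1)--(T3) hold trivially with $c_3=c_4=1$. Your additional remark that $g_k\in[L,v_k]^\bot$ by Lemma~\ref{lem:orth} matches the observation already made in the proof of Theorem~\ref{thm:cvg-psd}.
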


\begin{remark}
It is noted that Theorem 2.4 in \cite{Z2017CAMC}, Theorem 5.1 in \cite{LXY2021CMS} and Corollary~\ref{thm:cvg-sd} provided respectively the global convergence of the NA-LMM, NG-LMM and NWP-LMM with the steepest descent direction. Consequently, mathematical justifications of LMMs combined with several typical inexact normalized step-size search rules for the steepest descent direction have been systematically established. 
\end{remark}

\subsection{Conjugate gradient-type direction}
For $k=0,1,\ldots$, denote $v_k=v_k^L+v_k^\bot$ with $v_k^L\in L$ and $0\neq v_k^\bot\in L^\bot$. Similar to the construction of the nonlinear CG method in the optimization theory (see, e.g., \cite{DaiYuan2006,HZ2006PJO}), we consider the following CG-type direction for Algorithm~\ref{alg:wplmm}:
\begin{equation}\label{eq:cg-dk}
 d_k= \begin{cases}
          -g_0, & k=0, \\
          -g_k + \beta_k \Pi_k d_{k-1}, & k\geq1.
        \end{cases}
\end{equation}
Here, $\beta_k\in\mathbb{R}$ is a parameter to be determined, $g_k=\nabla E(w_k)$ is the gradient of $E$ at $w_k$ defined in \eqref{eq:grad-def}, and $\Pi_k$ is the orthogonal projection from $H$ onto $[L, v_k]^\bot$. 
Since $g_k\in[L, v_k]^\bot$ from Lemma~\ref{lem:orth}, we have $d_k\in[L, v_k]^\bot$ for all $k\geq 0$. In addition, according to the definition of $\Pi_k$, it holds that $(g_k,\Pi_kd_{k-1})=(g_k, d_{k-1})$, $k\geq 1$. Thus, the CG-type direction \eqref{eq:cg-dk} satisfies
\begin{equation}\label{eq:cg-gkdk}
  (g_k, d_k) = -\|g_k\|^2+\beta_k(g_k, d_{k-1}),\quad k\geq 1.
\end{equation}

\begin{remark}
We remark here that $\Pi_kd_{k-1}$ can be explicitly expressed as
\begin{align}\label{eq:Pid}
\Pi_kd_{k-1} = d_{k-1}-\|v_k^\bot\|^{-2}\big(d_{k-1},v_k^\bot\big)v_k^\bot,\quad k\geq1.
\end{align}
Actually, by applying the facts that $d_{k-1}\in[L, v_{k-1}]^\bot\subset L^\bot$, $\Pi_kd_{k-1}\in[L, v_k]^\bot\subset L^\bot$, and $\mathrm{Id}-\Pi_k$ is an orthogonal projection onto $([L, v_k]^\bot)^\bot=L\oplus[v_k^\bot]$, where $\mathrm{Id}$ is the identity operator on $H$ and $\oplus$ denotes the direct sum, we can conclude that
\[ d_{k-1}-\Pi_kd_{k-1}\in \big((L\oplus[v_k^\bot])\cap L^\bot\big)=[v_k^\bot]. \] 
Thus, $d_{k-1}-\Pi_kd_{k-1}=c\,v_k^\bot$ for some $c\in\mathbb{R}$. Taking the inner product with $v_k^\bot$ and noting that $\big(\Pi_kd_{k-1},v_k^\bot\big)=\big(\Pi_kd_{k-1},v_k-v_k^L\big)=0$, we can obtain
\[ \big(d_{k-1},v_k^\bot\big)=\big(d_{k-1}-\Pi_kd_{k-1},v_k^\bot\big)=c\|v_k^\bot\|^2, \]
yielding $c=\|v_k^\bot\|^{-2}\big(d_{k-1},v_k^\bot\big)$. Consequently, the expression \eqref{eq:Pid} is true. 
\end{remark}

The following lemma shows that, if the exact step-size search rule is applied in the previous iteration, the CG-type direction \eqref{eq:cg-dk} with an arbitrary parameter $\beta_k$ is a descent direction.
\begin{lemma}
For $k\geq 1$, let $v_k=v_{k-1}(\alpha_{k-1})$ with $d_k$ defined in \eqref{eq:cg-dk} and $\alpha_{k-1}>0$ a local minimizer of $E(p(v_{k-1}(\alpha)))$ along $\{\alpha:\alpha>0\}$. If $E\in C^1(H,\mathbb{R})$, $p$ is locally Lipschitz continuous around $v_k$, and $t_k>0$, then $(g_k, d_k)=-\|g_k\|^2$.
\end{lemma}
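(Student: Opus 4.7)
The plan is to exploit the first-order optimality condition at the exact minimizer $\alpha_{k-1}$, and use it to show that the gradient $g_k$ is orthogonal to the previous search direction $d_{k-1}$, which then immediately gives the desired identity via \eqref{eq:cg-gkdk}.

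First, I would invoke Lemma~\ref{lem:dEpvs} applied at level $k-1$: since $E\in C^1(H,\mathbb{R})$, $d_{k-1}\in[L,v_{k-1}]^\bot$, and the local peak selection $p$ is locally Lipschitz continuous around $v_k=v_{k-1}(\alpha_{k-1})$, the scalar function $\alpha\mapsto E(p(v_{k-1}(\alpha)))$ is continuously differentiable near $\alpha_{k-1}$, with derivative
\[
  \frac{\mathrm{d}}{\mathrm{d}\alpha}E(p(v_{k-1}(\alpha)))\Big|_{\alpha=\alpha_{k-1}}
  =\hat{t}_{v_{k-1}(\alpha_{k-1})}\,\langle E'(p(v_{k-1}(\alpha_{k-1}))),d_{k-1}\rangle,
\]
where $\hat{t}_{v_{k-1}(\alpha_{k-1})}=t_k/\sqrt{1+\alpha_{k-1}^2\|d_{k-1}\|^2}$.

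Next, since $\alpha_{k-1}>0$ is a local minimizer of $E(p(v_{k-1}(\alpha)))$ on $\{\alpha:\alpha>0\}$, Fermat's rule forces the above derivative to vanish. Combining this with $v_k=v_{k-1}(\alpha_{k-1})$ and $w_k=p(v_k)$, we obtain
\[
  \frac{t_k}{\sqrt{1+\alpha_{k-1}^2\|d_{k-1}\|^2}}\,\langle E'(w_k),d_{k-1}\rangle=0.
\]
By hypothesis $t_k>0$, so the prefactor is strictly positive, which yields $\langle E'(w_k),d_{k-1}\rangle=0$. Translating through the Riesz identification \eqref{eq:grad-def} of $g_k=\nabla E(w_k)$, this reads $(g_k,d_{k-1})=0$.

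Finally, substituting $(g_k,d_{k-1})=0$ into the identity \eqref{eq:cg-gkdk}, namely $(g_k,d_k)=-\|g_k\|^2+\beta_k(g_k,d_{k-1})$, immediately gives $(g_k,d_k)=-\|g_k\|^2$, regardless of the value of $\beta_k$. There is no real obstacle here; the only small point to verify carefully is that Lemma~\ref{lem:dEpvs} is applicable at $\alpha=\alpha_{k-1}$, but this is guaranteed by the local Lipschitz continuity of $p$ around $v_k$ assumed in the hypothesis.
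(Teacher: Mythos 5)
Your proof is correct and follows essentially the same route as the paper's: apply Lemma~\ref{lem:dEpvs} to differentiate $\alpha\mapsto E(p(v_{k-1}(\alpha)))$, use the first-order optimality condition at $\alpha_{k-1}$ together with $t_k>0$ to conclude $(g_k,d_{k-1})=0$, and then substitute into \eqref{eq:cg-gkdk}. No gaps.
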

\begin{proof}
From Lemma~\ref{lem:dEpvs}, $E(p(v_{k-1}(\alpha)))$ is continuously differentiable at $\alpha=\alpha_{k-1}$ and
\[
 \frac{\mathrm{d}}{\mathrm{d} \alpha}E(p(v_{k-1}(\alpha)))\Big|_{\alpha=\alpha_{k-1}}
 = \frac{t_k}{\sqrt{1+\alpha_{k-1}^2\|d_{k-1}\|^2}}\langle E'(p(v_k)),d_{k-1} \rangle.
\]
By applying the facts that $t_k>0$ and $E(p(v_{k-1}(\alpha)))$ attains its local minimum at $\alpha_{k-1}$, we have $\langle E'(p(v_k)),d_{k-1}\rangle=(g_k, d_{k-1})=0$. Thus, the conclusion follows from \eqref{eq:cg-gkdk} immediately.
\end{proof}

Due to the fact that the exact step-size search rule is quite expensive in practical computations, we focus on how to ensure that the CG-type direction $d_k$ defined in \eqref{eq:cg-dk} is a descent direction when a suitable inexact step-size search rule is used.

Inspired by the well-known Fletcher-Reeves CG method \cite{FR1964CJ} in the optimization theory, we set
\begin{equation}\label{eq:beta-fr}
  \beta_k^{\text{\rm FR-like}} = \frac{\gamma_k\|g_k\|^2}{\|g_{k-1}\|^2},\quad k\geq 1,
\end{equation}
where $\gamma_k=\hat{t}_k/t_{k-1}$ with $\hat{t}_k=t_k/\sqrt{1+\alpha_{k-1}^2\|d_{k-1}\|^2}$. It can be verified that the CG-type direction \eqref{eq:cg-dk} with $\beta_k=\beta_k^{\text{\rm FR-like}}$ is a descent direction if the step-size $\alpha_k$ in each iteration satisfies the normalized strong Wolfe-Powell-type step-size search rule \eqref{eq:swp-k} with $\sigma_2\in(0,1/2)$, as stated in the following lemma. The proof follows the lines of the proof for Theorem~1 in \cite{Al-Baali1985}.

\begin{lemma}\label{lem:cgfr-desc}
For $k=0,1,\ldots$, if $g_k\neq0$, $d_k$ in Algorithm~\ref{alg:wplmm} is defined in \eqref{eq:cg-dk} with $\beta_k=\beta_k^{\text{\rm FR-like}}$, and $\alpha_k$ is determined by the normalized strong Wolfe-Powell-type step-size search rule \eqref{eq:swp-k} with $\sigma_2\in(0,1/2)$, then
\begin{equation}\label{eq:cgfr-ineq}
  -\frac{1-\sigma_2^{k+1}}{1-\sigma_2}\leq\frac{(g_k, d_k)}{\|g_k\|^2} \leq -\frac{1-2\sigma_2+\sigma_2^{k+1}}{1-\sigma_2},\quad k=0,1,\ldots.
\end{equation}
Consequently,
\begin{equation}\label{eq:cgfr-desc}
(g_k, d_k)<0,\quad k=0,1,\ldots.
\end{equation}
\end{lemma}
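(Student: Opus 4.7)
The plan is a straightforward induction on $k$, mirroring Al-Baali's 1985 argument but taking care to translate between the dual pairing $\langle E'(w), \cdot\rangle$ used in the Wolfe-Powell rule \eqref{eq:swp-k-2} and the inner-product gradient $g_k = \nabla E(w_k)$ defined by \eqref{eq:grad-def}. Note that by \eqref{eq:grad-def} we have $\langle E'(w_k), d_{k-1}\rangle = (g_k, d_{k-1})$, and similarly at index $k-1$, so the second inequality in \eqref{eq:swp-k} can be rewritten as $\hat{t}_k |(g_k, d_{k-1})| \leq -\sigma_2 t_{k-1}(g_{k-1}, d_{k-1})$, and dividing by $t_{k-1}>0$ (guaranteed implicitly by $g_{k-1}\ne 0$ together with the setup of Algorithm~\ref{alg:wplmm}) yields $\gamma_k |(g_k, d_{k-1})| \leq -\sigma_2 (g_{k-1}, d_{k-1})$ with $\gamma_k = \hat{t}_k/t_{k-1}$, which is exactly the quantity showing up in $\beta_k^{\text{FR-like}}$.

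For the base case $k=0$, $d_0 = -g_0$ gives $(g_0,d_0)/\|g_0\|^2 = -1$, and both sides of \eqref{eq:cgfr-ineq} reduce to $-1$, so the inequality holds trivially. For the inductive step, I would start from the identity \eqref{eq:cg-gkdk} with $\beta_k = \beta_k^{\text{FR-like}} = \gamma_k \|g_k\|^2/\|g_{k-1}\|^2$, namely
\begin{equation*}
\frac{(g_k,d_k)}{\|g_k\|^2} = -1 + \gamma_k\,\frac{(g_k,d_{k-1})}{\|g_{k-1}\|^2}.
\end{equation*}
Using the rewritten Wolfe-Powell bound together with the induction hypothesis $\big|(g_{k-1},d_{k-1})/\|g_{k-1}\|^2\big| \leq (1-\sigma_2^k)/(1-\sigma_2)$ (which follows from the hypothesis at level $k-1$ since both endpoints of \eqref{eq:cgfr-ineq} lie in absolute value below that common bound), I obtain
\begin{equation*}
\left|\gamma_k\,\frac{(g_k,d_{k-1})}{\|g_{k-1}\|^2}\right| \leq -\sigma_2\,\frac{(g_{k-1},d_{k-1})}{\|g_{k-1}\|^2} \leq \frac{\sigma_2 - \sigma_2^{k+1}}{1-\sigma_2}.
\end{equation*}
Combining gives the two-sided estimate, and an elementary algebraic simplification turns $-1 \pm (\sigma_2-\sigma_2^{k+1})/(1-\sigma_2)$ into exactly the two bounds claimed in \eqref{eq:cgfr-ineq}.

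For the descent conclusion \eqref{eq:cgfr-desc}, I would observe that the hypothesis $\sigma_2 \in (0,1/2)$ forces $1 - 2\sigma_2 > 0$, whence the upper-bound numerator $1 - 2\sigma_2 + \sigma_2^{k+1}$ is strictly positive for every $k \geq 0$; since $1-\sigma_2 > 0$ as well, the upper bound in \eqref{eq:cgfr-ineq} is strictly negative, so $(g_k,d_k) < 0$. The main bookkeeping obstacle is the correct handling of the factor $\gamma_k = \hat{t}_k/t_{k-1}$: one must notice that it arises precisely on dividing the curvature condition \eqref{eq:swp-k-2} by $t_{k-1}$, and that this is exactly the same $\gamma_k$ that the authors built into $\beta_k^{\text{FR-like}}$ in \eqref{eq:beta-fr}, which is what allows the Wolfe-Powell bound to cancel the $t$-weights and reduce the analysis to the Euclidean Fletcher-Reeves estimate. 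Aside from this, the argument is purely algebraic induction and needs no further regularity beyond what Algorithm~\ref{alg:wplmm} already provides.
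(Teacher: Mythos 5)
Your proof is correct and follows essentially the same route as the paper: the same induction on $k$, the same identity $\frac{(g_k,d_k)}{\|g_k\|^2}=-1+\gamma_k\frac{(g_k,d_{k-1})}{\|g_{k-1}\|^2}$, the same use of the curvature condition in the form $\gamma_k|(g_k,d_{k-1})|\leq-\sigma_2(g_{k-1},d_{k-1})$, and the same algebra yielding the two bounds. Your explicit bookkeeping of the identification $\langle E'(w_k),d_{k-1}\rangle=(g_k,d_{k-1})$ and of the factor $\gamma_k=\hat{t}_k/t_{k-1}$ is exactly what the paper does implicitly, so there is nothing to add.
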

\begin{proof}
For $k=0$, $d_0=-g_0$, conclusions \eqref{eq:cgfr-ineq} and \eqref{eq:cgfr-desc} are obvious. According to the inductive argument, suppose that conclusions \eqref{eq:cgfr-ineq} and \eqref{eq:cgfr-desc} hold for $k-1$ ($k\geq1$). In view of the definition of $\beta_k^{\text{\rm FR-like}}$ in \eqref{eq:beta-fr}, the fact \eqref{eq:cg-gkdk} yields
\[
  \frac{(g_k, d_k)}{\|g_k\|^2} = -1 + \gamma_k\frac{(g_k, d_{k-1})}{\|g_{k-1}\|^2},\quad k\geq 1.
\]
Using the inductive assumption \eqref{eq:cgfr-desc} for $k-1$  ($k\geq1$), the normalized strong Wolfe-Powell-type step-size search rule \eqref{eq:swp-k} states that
\[
  \gamma_k\left|(g_k, d_{k-1})\right| \leq -\sigma_2 (g_{k-1},d_{k-1}),
\]
and therefore,
\[
  -1 + \sigma_2\frac{(g_{k-1},d_{k-1})}{\|g_{k-1}\|^2}
  \leq \frac{(g_k, d_k)}{\|g_k\|^2}
  \leq -1 - \sigma_2\frac{(g_{k-1},d_{k-1})}{\|g_{k-1}\|^2}.
\]
Further, by the inductive assumption \eqref{eq:cgfr-ineq} for $k-1$ ($k\geq1$), we can arrive at
\[
  -\frac{1-\sigma_2^{k+1}}{1-\sigma_2}
  = -1 - \sigma_2\frac{1-\sigma_2^k}{1-\sigma_2}
  \leq \frac{(g_k, d_k)}{\|g_k\|^2}
  \leq -1 + \sigma_2\frac{1-\sigma_2^k}{1-\sigma_2}
  = -\frac{1-2\sigma_2+\sigma_2^{k+1}}{1-\sigma_2},
\]
which leads to the conclusion \eqref{eq:cgfr-ineq} for $k\geq 1$. Since $\sigma_2\in(0,1/2)$, it immediately follows that $(g_k, d_k)<0$, $k\geq 1$. The proof is finished by the inductive argument.
\end{proof}

\begin{remark}
Under assumptions in Lemma~\ref{lem:cgfr-desc}, we have from \eqref{eq:cgfr-ineq} that $(g_k, d_k)\leq-c_1\|g_k\|^2$ with $c_1=(1-2\sigma_2)/(1-\sigma_2)>0$, i.e., the CG-type direction \eqref{eq:cg-dk} with $\beta_k=\beta_k^{\text{\rm FR-like}}$ satisfies the assumption {\rm(A1)} in Theorem~\ref{thm:cvg-wplmm}. Moreover, under the same assumptions, these conclusions in Lemma~\ref{lem:cgfr-desc} can be extended directly to any choice of $\beta_k$ satisfying $|\beta_k| \leq \beta_k^{\text{\rm FR-like}}$. 
\end{remark}

\begin{remark}
It is currently unclear whether assumptions {\rm(A2)} and {\rm(A3)} in Theorem~\ref{thm:cvg-wplmm} hold for the NWP-LMM with the CG-type descent direction \eqref{eq:cg-dk}. Thus, the global convergence of it has not been verified yet and will be our future work. Indeed, the NWP-LMM with the CG-type descent direction \eqref{eq:cg-dk} is very efficient for finding multiple solutions of semilinear elliptic PDEs, compared to traditional LMMs and the NWP-LMM with the steepest descent direction, which will be shown in the section~\ref{sec:numer}. Actually, several different constructions of CG-type descent directions can also be designed based on similar ideas of various CG methods in the optimization theory, which can be found, e.g., in \cite{DaiYuan2006,HZ2006PJO}.
\end{remark}

\section{Numerical examples}\label{sec:numer}

In this section, we apply our NWP-LMM to find multiple unstable solutions of the semilinear elliptic boundary value problem (BVP) 
\begin{equation}\label{eq:P1}
\left\{
\begin{aligned}
  -\Delta u(\mathbf{x})+a(\mathbf{x})u(\mathbf{x}) &=f(\mathbf{x},u(\mathbf{x})), & \mathbf{x}\in\Omega, \\
  u(\mathbf{x}) &=0, & \mathbf{x}\in\partial\Omega,
\end{aligned}
\right.
\end{equation}
where $\Omega$ is a bounded domain in $\mathbb{R}^N$ with a Lipschitz boundary $\partial\Omega$, $a\in L^{\infty}(\Omega)$ and $a(\mathbf{x})\geq0$ ($\mathbf{x}\in\Omega$), and $f:\bar{\Omega}\times\mathbb{R}\to\mathbb{R}$ satisfies the following standard hypotheses \cite{Rabinowitz1986}:
\begin{enumerate}[(f1)]
  \item $f(\mathbf{x},\xi)$ is locally Lipschitz on $\bar{\Omega}\times\mathbb{R}$;
  \item there is a constant $c>0$ s.t. $|f(\mathbf{x},\xi)|\leq c(1+|\xi|^{s-1})$ for some $s\in(2,2^*)$, where $2^*:=2N/(N-2)$, if $N\geq3$; and $2^*:=\infty$, if $N=1,2$;
  \item there are constants $\mu>2$ and $R>0$ s.t., for $|\xi|\geq R$, $0<\mu F(\mathbf{x},\xi)\leq f(\mathbf{x},\xi)\xi$, where $F(\mathbf{x},\xi)=\int_0^{\xi}f(\mathbf{x},t)\mathrm{d} t$;
  \item $f(\mathbf{x},\xi)=o(\xi)$ as $\xi\to0$.
\end{enumerate}

Define $H=H_0^1(\Omega)$ with the $a$-dependent inner product and norm as
\begin{equation}\label{eq:a-nrmipd}
  (u, v)_a=\int_{\Omega} \Big(\nabla u(\mathbf{x})\cdot\nabla v(\mathbf{x})+a(\mathbf{x})u(\mathbf{x})v(\mathbf{x})\Big) \mathrm{d}\mathbf{x},\quad \|u\|_a=\sqrt{(u,u)_a}.
\end{equation}
Since $a(\mathbf{x})$ is nonnegative and uniformly bounded, the norm $\|\cdot\|_a$ is equivalent to the usual norm in $H_0^1(\Omega)$, i.e., $\|u\|=\big(\int_{\Omega}|\nabla u(\mathbf{x})|^2\mathrm{d} \mathbf{x}\big)^{1/2}$. The variational energy functional associated to the BVP \eqref{eq:P1} is given as
\begin{equation*}
  E(u) = \!\int_{\Omega}\left(\frac12\big(|\nabla u(\mathbf{x})|^2+a(\mathbf{x})|u(\mathbf{x})|^2\big) - F(\mathbf{x},u(\mathbf{x})) \right)\! \mathrm{d}\mathbf{x}
  = \frac12\|u\|_a^2-\int_{\Omega}F(\mathbf{x},u(\mathbf{x}))\mathrm{d}\mathbf{x}.
\end{equation*}

It is well known that, under hypotheses (f1)-(f4), $E\in C^1(H,\mathbb{R})$ and satisfies the (PS) condition. Each critical point of $E$ is a weak solution and also a classical solution to the BVP \eqref{eq:P1} \cite{Rabinowitz1986}. In addition, $u\equiv0$ is a local minimizer of $E$. Moreover, in any finite-dimensional subspace of $H$, $E(u)\to-\infty$ uniformly as $\|u\|_a\to\infty$. Hence, for any finite-dimensional closed subspace $L$, the peak mapping $P$ of $E$ w.r.t. $L$ is nonempty. According to \cite{LZ2001SISC}, we assume, in addition to hypotheses (f1)-(f4), that
\begin{enumerate}[(f1)]
\setcounter{enumi}{4} 
\item $f(x,\xi)/|\xi|$ is increasing w.r.t. $\xi$ on $\mathbb{R}\backslash\{0\}$.
\end{enumerate}
For $L=\{0\}$, as shown in \cite{LZ2001SISC}, under hypotheses (f1)-(f5), $E$ has only one local maximizer in any direction, i.e., $E$ has a unique peak selection $p(v)=t_vv$ ($\forall v\in S_H$) w.r.t. $L=\{0\}$. Moreover, there exists $\delta>0$ s.t. $t_v=\|p(v)\|\geq\delta$ for any $v\in S_H$, that is exactly the separation condition in Theorem~\ref{thm:cvg-wplmm} when $L=\{0\}$. For any finite-dimensional closed subspace $L$, the uniqueness of the peak selection of $E$ w.r.t. $L$ implies its continuity. As a result, the unique peak selection $p$ w.r.t. $L=\{0\}$ is continuous on $S_H$. 
Moreover, if conditions (f1)-(f5) hold and
\begin{enumerate}[(f1)]
\setcounter{enumi}{5} 
\item $f(x,\xi)$ is $C^1$ and there exists a constant $\tilde{c}>0$ s.t. $|f_{\xi}(x,\xi)|\leq \tilde{c}(1+|\xi|^{s-2})$, for $s$ as specified in (f2), 
\end{enumerate}
then the unique peak selection $p$ w.r.t. $L=\{0\}$ is $C^1$ \cite{LZ2001SISC}.

Due to the limit of the length of the paper, numerical experiments mainly focus on the following three cases of the BVP \eqref{eq:P1} on a 2D domain (a square or dumbbell-shaped domain). 
\begin{enumerate}[{Case}~1.]
\item ({\em NLSE in the focusing regime} \cite{DGPS1999RMP}) 
  $f(\mathbf{x}, u)=u^3$, $a(\mathbf{x})=\omega|\mathbf{x}|^2$ with $\omega>0$ and $|\mathbf{x}|:=\sqrt{x_1^2+x_2^2}$ for all $\mathbf{x}=(x_1,x_2)\in\Omega=(-1,1)^2\subset\mathbb{R}^2$.
\item ({\em H\'{e}non equation} \cite{CZN2000IJBC,LZ2002SISC}) 
  $a(\mathbf{x})=0$, $f(\mathbf{x}, u)=|\mathbf{x}|^{\ell}u^3$ ($\ell\geq0$) and $\Omega=(-1,1)^2$.
\item ({\em Chandrasekhar equation} \cite{CZN2000IJBC}) 
  $a(\mathbf{x})=0$, $f(\mathbf{x}, u)=(u^2+2u)^{3/2}$, $u\geq0$ and $\Omega$ is a 2D dumbbell-shaped domain as described later.  
\end{enumerate}
It is clear that hypotheses (f1)-(f6) hold for all functions $f$ in Cases~1-3. In addition, our approach is efficient for different domains such as a L-shaped domain, a ball or other complex domains in high dimensions.

In our numerical experiments, the initial ascent direction $v_0$ is taken as $v_0=\tilde{v}_0/\|\tilde{v}_0\|_a$ with $\tilde{v}_0$ the solution to the Poisson problem
\begin{equation}\label{eq:poisson-v0}
\left\{
\begin{aligned}
  -\Delta\tilde{v}_0(\mathbf{x}) &= \mathbf{1}_{\Omega_1}(\mathbf{x})-\mathbf{1}_{\Omega_2}(\mathbf{x}), & \mathbf{x}\in\Omega, \\
  \tilde{v}_0(\mathbf{x}) &= 0, & \mathbf{x}\in\partial\Omega,
\end{aligned}
\right.
\end{equation}
where $\mathbf{1}_.(\mathbf{x})$ is the indicator function and $\Omega_1$, $\Omega_2$ are two selected disjoint subdomains of $\Omega$ to control the convexity of $v_0$. A numerical solution is reached at $w_k=p(v_k)$ by the NWP-LMM algorithm when $\|g_k\|_a=\|\nabla E(w_k)\|_a\leq 10^{-5}$ and $\max_{\mathbf{x}\in\Omega}|\Delta w_k(\mathbf{x})-a(\mathbf{x})w_k(\mathbf{x})+f(\mathbf{x},w_k(\mathbf{x}))|\leq 5\times 10^{-5}$. 
Particularly, it is necessary to explain more about how to numerically compute the gradient of the energy functional and a peak selection in numerical experiments. According to the definition of the gradient $g_k=\nabla E(w_k)$ of $E$ at $w_k\in H$ in \eqref{eq:grad-def} and the definition of the inner product $(\cdot,\cdot)_a$ in \eqref{eq:a-nrmipd}, 
the gradient $g_k$ can be expressed as $g_k=w_k-\phi_k$ with $\phi_k$ determined by the linear elliptic BVP
\begin{equation}
\left\{
\begin{aligned}
  -\Delta \phi_k(\mathbf{x})+a(\mathbf{x})\phi_k(\mathbf{x}) &= f(\mathbf{x},w_k(\mathbf{x})), & \mathbf{x}\in\Omega,\\
  \phi_k(\mathbf{x}) &=0, & \mathbf{x}\in\partial\Omega,
\end{aligned}
\right.
\end{equation}
which can be solved efficiently by a standard numerical method, such as the finite element method (FEM) or the finite difference method. In our numerical code, {\ttfamily assempde}, a FEM-based subroutine provided by the MATLAB PDE Toolbox, is implemented to accomplish this task. The square domain in Cases~1-2 and the dumbbell-shaped domain in Case~3 are, respectively, discretized with 32768 and 15552 triangular elements. In addition, the computation of a peak selection $p(v_k)$ of $E$ at $v_k\in S_H$ w.r.t. a given $(n-1)$-dimensional closed subspace $L$ ($n\geq1$) is an optimization problem in the $n$-dimensional half subspace $[L,v_k]$. To do this, a MATLAB subroutine {\tt fminunc} with the termination tolerance on the first-order optimality ${\tt Tol}=10^{-8}$ is called in our numerical code.

Further, for the convenience of numerical comparisons, we introduce the following notations for three different algorithms:
\begin{itemize}
  \item[$\bullet$] {\tt SD-Armijo}: the traditional LMM algorithm by utilizing the steepest descent direction $d_k=-g_k$ and the normalized Armijo-type step-size search rule \cite{LZ2002SISC,XYZ2012SISC}. At each iterative step of this algorithm, the step-size $\alpha_k$ is chosen as
\begin{equation}\label{eq:ak-armijo}
\alpha_k = \max_{m\in\mathbb{N}}\left\{\lambda\rho^m\,:\, E(p(v_k(\lambda\rho^m))) \leq E(p(v_k)) - \sigma\lambda\rho^mt_k\|g_k\|^2\right\},
\end{equation}
where constants $\sigma$, $\lambda$ are fixed as $\sigma=10^{-4}$ and $\lambda=0.1$.
  \item[$\bullet$] {\tt SD-StrongWolfe}: the NWP-LMM algorithm by utilizing the steepest descent direction $d_k=-g_k$ and the normalized strong Wolfe-Powell-type step-size search rule \eqref{eq:swp-k} with constants $\sigma_1=10^{-4}$ and $\sigma_2=0.4$.
  \item[$\bullet$] {\tt CG-StrongWolfe}: the NWP-LMM algorithm by utilizing the CG-type direction \eqref{eq:cg-dk} with $\beta_k=\beta_k^{\text{\rm FR-like}}$ given in \eqref{eq:beta-fr} and the normalized strong Wolfe-Powell-type step-size search rule \eqref{eq:swp-k} with constants $\sigma_1=10^{-4}$ and $\sigma_2=0.4$.
\end{itemize}
It is worthwhile to point out that it is generally not feasible to directly combine the CG-type direction and the Armijo-type step-size search rule in the LMM (i.e., $d_k$ is not guaranteed to be a descent direction) as observed numerically.

\subsection{Numerical results for the nonlinear Schr\"{o}dinger equation}\label{sec:numer-NLSE}

In this subsection, we report numerical results of Case~1, i.e., the NLSE in the focusing regime on the square domain $\Omega=(-1,1)^2$ as
\begin{equation}
\left\{
\begin{aligned}
  -\Delta u(\mathbf{x})+\omega|\mathbf{x}|^2u(\mathbf{x}) &=u^3(\mathbf{x}), & \mathbf{x}\in\Omega, \\
  u(\mathbf{x}) &=0, & \mathbf{x}\in\partial\Omega.
\end{aligned}
\right.
\end{equation}
Taking $\omega=8$, limited by paper length, only ten different solutions labeled by $u_1,u_2,\ldots,u_{10}$ are shown in Fig.~\ref{fig:NLSrect10sols} for their profiles and features. The corresponding information on the support space $L$, subdomains $\Omega_1$ and $\Omega_2$ used in \eqref{eq:poisson-v0}, and energy values of these solutions is listed in Table~\ref{tab:NLSrect10sols}. In addition, numerical comparisons of the {\tt SD-StrongWolfe}, {\tt CG-StrongWolfe} and {\tt SD-Armijo} in terms of CPU times for computing these solutions of the NLSE are presented in Fig.~\ref{fig:NLScpubar}.

From Figs.~\ref{fig:NLSrect10sols}-\ref{fig:NLScpubar}, Table~\ref{tab:NLSrect10sols} and additional results not shown here, we observe that three LMM algorithms considered are effective for finding multiple solutions of the NLSE in the focusing regime and the {\tt CG-StrongWolfe} shows the best performance among its LMM companions. In addition, $u_1$ (Fig.~\ref{fig:NLSrect10sols}(a)) is the only positive solution with the lowest energy value, i.e., it is the ground state solution. 

\begin{table}[!ht]
\centering
\caption{The information to corresponding solutions of the NLSE in Fig.~\ref{fig:NLSrect10sols}.}
\label{tab:NLSrect10sols}
\small
\begin{tabular}{lrllc}
\hline
 $u_n$ & ~~~$E(u_n)$~ & ~~$L$~~~~~ & ~$\Omega_1$ ($\Omega_2=\Omega\backslash\Omega_1$) & Graphics \\
\hline
 $u_1$    &  14.7889 & ~$\{0\}$             & ~$\Omega$              & Fig.~\ref{fig:NLSrect10sols}(a) \\
 $u_2$    &  73.8223 & ~$[u_1]$             & $\{x_1>0\}$            & Fig.~\ref{fig:NLSrect10sols}(b) \\
 $u_3$    &  73.8223 & ~$[u_1]$             & $\{x_2>0\}$            & Fig.~\ref{fig:NLSrect10sols}(c) \\
 $u_4$    &  70.9151 & ~$[u_1]$             & $\{x_1+x_2>0\}$        & Fig.~\ref{fig:NLSrect10sols}(d) \\
 $u_5$    &  70.9151 & ~$[u_1]$             & $\{x_1-x_2>0\}$        & Fig.~\ref{fig:NLSrect10sols}(e) \\
 $u_6$    & 210.0238 & ~$[u_1,u_2]$         & $\{|x_1|>0.2\}$        & Fig.~\ref{fig:NLSrect10sols}(f) \\
 $u_7$    & 178.2474 & ~$[u_1,u_4]$         & $\{|x_1+x_2|>0.3\}$    & Fig.~\ref{fig:NLSrect10sols}(g) \\
 $u_8$    & 213.6423 & ~$[u_1,u_2,u_3]$     & $\{x_1x_2>0\}$         & Fig.~\ref{fig:NLSrect10sols}(h) \\
 $u_9$    & 243.2646 & ~$[u_1,u_4,u_5]$     & $\{|x_1|>|x_2|\}$      & Fig.~\ref{fig:NLSrect10sols}(i) \\
 $u_{10}$ & 306.4755 & ~$[u_1,u_2,u_3,u_8]$ & $\{x_1^2+x_2^2>0.25\}$ & Fig.~\ref{fig:NLSrect10sols}(j) \\
\hline
\end{tabular}
\end{table}

\begin{figure}[!t]
\centering
\includegraphics[width=.19\textwidth]{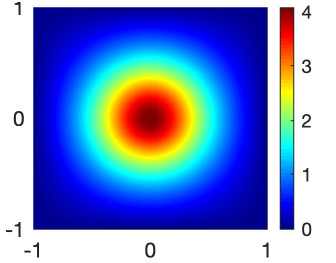}
\includegraphics[width=.19\textwidth]{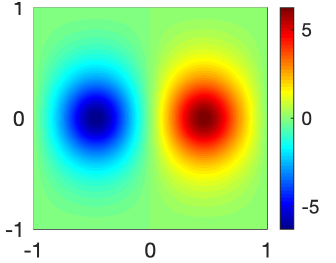}
\includegraphics[width=.19\textwidth]{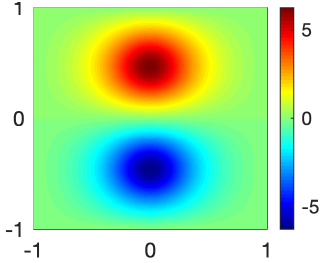}
\includegraphics[width=.19\textwidth]{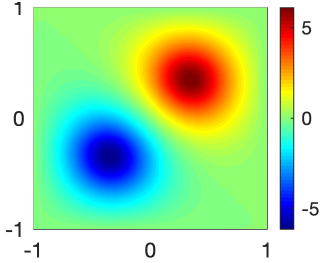}
\includegraphics[width=.19\textwidth]{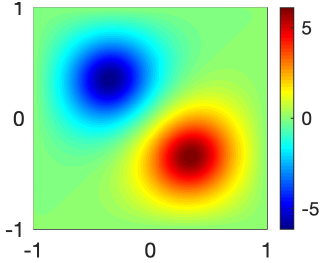} \\
\hspace*{.067\textwidth} (a)\hfill (b)\hfill (c)\hfill (d)\hfill (e) \hspace*{.085\textwidth} \\[5pt]
\includegraphics[width=.19\textwidth]{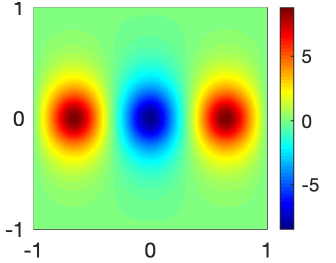}
\includegraphics[width=.19\textwidth]{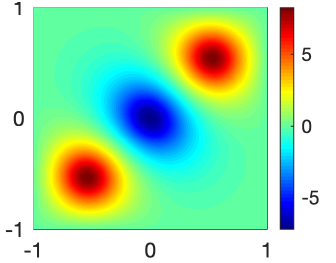}
\includegraphics[width=.19\textwidth]{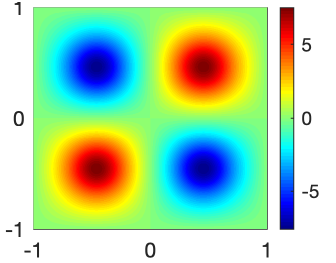}
\includegraphics[width=.19\textwidth]{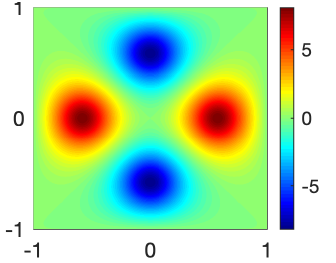}
\includegraphics[width=.19\textwidth]{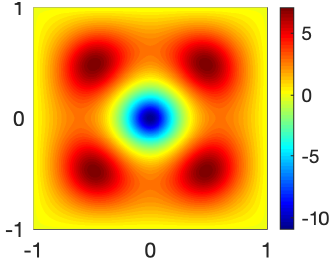} \\
\hspace*{.067\textwidth} (f)\hfill (g)\hfill (h)\hfill (i)\hfill (j) \hspace*{.085\textwidth} \\
\caption{Profiles of ten different solutions of the NLSE: 
(a) the single-peak positive (ground state) solution $u_1$ concentrated mainly on the center of the domain; 
(b)-(j) nine multi-peak sign-changing solutions $u_2\sim u_{10}$.}
\label{fig:NLSrect10sols}
\end{figure}

\begin{figure}[!t]
\centering
\includegraphics[width=.75\textwidth]{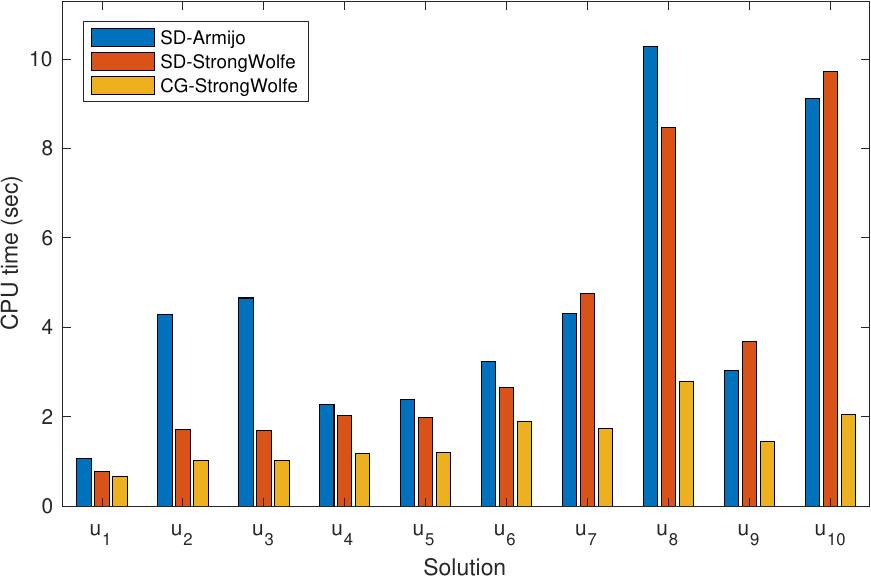}
\caption{Comparison of CPU time of LMMs for finding solutions of the NLSE in Fig.~\ref{fig:NLSrect10sols}.}
\label{fig:NLScpubar}
\end{figure}

\subsection{Numerical results for the H\'{e}non equation}\label{sec:numer-Henon}

Now, we report numerical results of Case~2, i.e., the H\'{e}non equation on the square domain $\Omega=(-1,1)^2$  as
\begin{equation}
\left\{
\begin{aligned}
  -\Delta u(\mathbf{x}) &=|\mathbf{x}|^\ell u^3(\mathbf{x}), & \mathbf{x}\in\Omega, \\
  u(\mathbf{x}) &=0, & \mathbf{x}\in\partial\Omega.
\end{aligned}
\right.
\end{equation}
 Fix $\ell=6$,  due to the space limitation, only twelve solutions we obtained, labeled by $u_1,u_2,\ldots,u_{12}$, are displayed in Fig.~\ref{fig:Henonrect12sols} for their profiles and features. The corresponding information on the support space $L$, subdomains $\Omega_1$ and $\Omega_2$ used in \eqref{eq:poisson-v0}, and energy values of these solutions is listed in Table~\ref{tab:Henonrect12sols}. In addition, numerical comparisons of the {\tt SD-StrongWolfe}, {\tt CG-StrongWolfe} and {\tt SD-Armijo} in terms of CPU times for computing these solutions of the H\'{e}non equation are provided in Fig.~\ref{fig:Henoncpubar}. 

From Figs.~\ref{fig:Henonrect12sols}-\ref{fig:Henoncpubar}, Table~\ref{tab:Henonrect12sols} and additional results not shown here, we observe that three LMM algorithms considered can effectively find multiple solutions of the H\'{e}non equation. As expected, the {\tt CG-StrongWolfe} also shows the best performance among its LMM companions. In addition, positive solutions are not unique in the case of $\ell=6$.

\begin{table}[!t]
\centering
\caption{The information to corresponding solutions of the H\'{e}non equation in Fig.~\ref{fig:Henonrect12sols}.}
\label{tab:Henonrect12sols}
\small
\begin{tabular}{lrlllc}
\hline
$u_n$ & ~~~$E(u_n)$~ & ~~$L$~~~~~ & $~\Omega_1$ & ~$\Omega_2$ & Graphics \\
\hline
 $u_1$    &  61.9634 & ~$\{0\}$         & $\{x_1>0,x_2>0\}$ & ~$\varnothing$    & Fig.~\ref{fig:Henonrect12sols}(a) \\
 $u_2$    & 120.7887 & ~$[u_1]$         & $\{x_1<0,x_2>0\}$ & ~$\varnothing$    & Fig.~\ref{fig:Henonrect12sols}(b) \\
 $u_3$    & 122.4078 & ~$[u_1]$         & $\{x_1<0,x_2<0\}$ & ~$\varnothing$    & Fig.~\ref{fig:Henonrect12sols}(c) \\
 $u_4$    & 126.6988 & ~$[u_1]$         & $\{x_2>0\}$       & ~$\varnothing$    & Fig.~\ref{fig:Henonrect12sols}(d) \\
 $u_5$    & 125.3561 & ~$[u_1]$         & $\{x_1>0,x_2>0\}$ & $\{x_1<0,x_2<0\}$ & Fig.~\ref{fig:Henonrect12sols}(e) \\
 $u_6$    & 177.6068 & ~$[u_1,u_2]$     & $\{x_1<0,x_2<0\}$ & ~$\varnothing$    & Fig.~\ref{fig:Henonrect12sols}(f) \\
 $u_7$    & 187.1379 & ~$[u_1,u_3]$     & $\{x_2>0\}$       & ~$\varnothing$    & Fig.~\ref{fig:Henonrect12sols}(g) \\
 $u_8$    & 189.9406 & ~$[u_1,u_4]$     & $\{x_1<0,x_2<0\}$ & ~$\varnothing$    & Fig.~\ref{fig:Henonrect12sols}(h) \\
 $u_9$    & 230.0141 & ~$[u_1,u_2,u_6]$ & $\{x_1>0,x_2<0\}$ & ~$\varnothing$    & Fig.~\ref{fig:Henonrect12sols}(i) \\
 $u_{10}$ & 247.0220 & ~$[u_1,u_2,u_6]$ & $\{x_2<0\}$       & $\{x_2>0\}$       & Fig.~\ref{fig:Henonrect12sols}(j) \\
 $u_{11}$ & 250.6746 & ~$[u_1,u_2,u_6]$ & $\{x_1x_2>0\}$    & $\{x_1x_2<0\}$    & Fig.~\ref{fig:Henonrect12sols}(k) \\
 $u_{12}$ & 255.9728 & ~$[u_1,u_2,u_6]$ & $\{x_1x_2<0\}$    & ~$\varnothing$    & Fig.~\ref{fig:Henonrect12sols}(l) \\
\hline
\end{tabular}
\end{table}

\begin{figure}[!t]
\centering
\includegraphics[width=.22\textwidth]{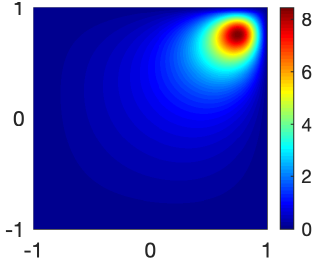}
\includegraphics[width=.22\textwidth]{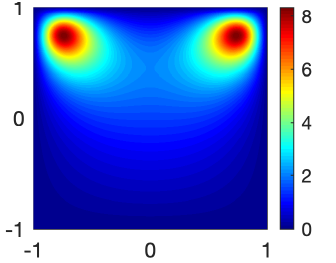}
\includegraphics[width=.22\textwidth]{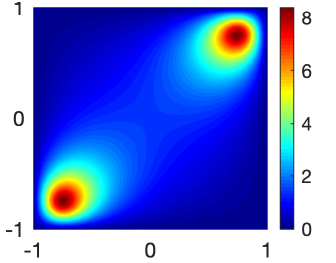}
\includegraphics[width=.22\textwidth]{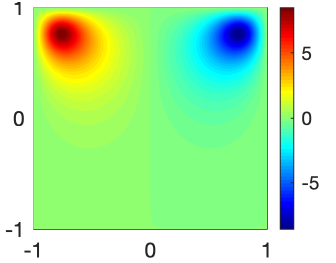} \\
\hspace*{.12\textwidth} (a)\hfill (b)\hfill (c)\hfill (d) \hspace*{.14\textwidth} \\[5pt]
\includegraphics[width=.22\textwidth]{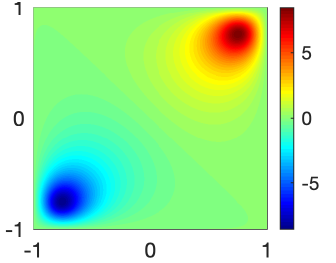}
\includegraphics[width=.22\textwidth]{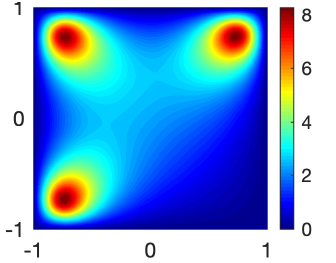}
\includegraphics[width=.22\textwidth]{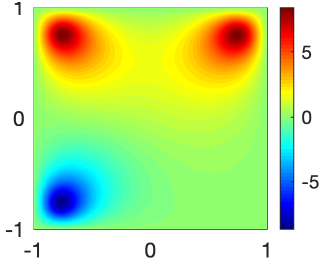}
\includegraphics[width=.22\textwidth]{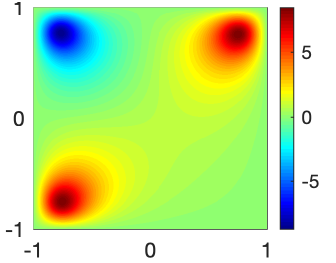} \\
\hspace*{.12\textwidth} (e)\hfill (f)\hfill (g)\hfill (h) \hspace*{.14\textwidth} \\[5pt]
\includegraphics[width=.22\textwidth]{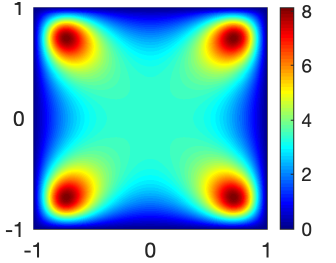}
\includegraphics[width=.22\textwidth]{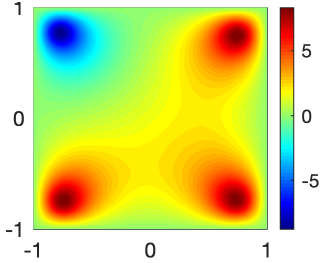}
\includegraphics[width=.22\textwidth]{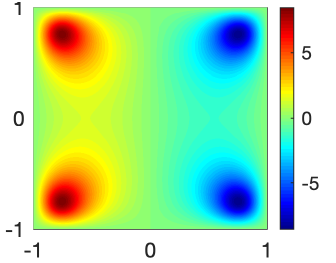}
\includegraphics[width=.22\textwidth]{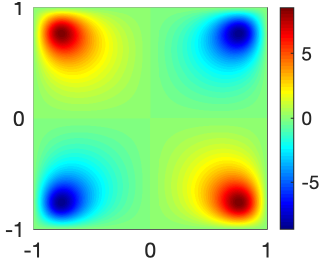} \\
\hspace*{.12\textwidth} (i)\hfill (j)\hfill (k)\hfill (l) \hspace*{.14\textwidth} \\
\caption{Profiles of twelve different solutions of the H\'{e}non equation:
(a) the single-peak positive (ground state) solution $u_1$ concentrated mainly on the corner;
(b) a two-peak positive solution $u_2$ concentrated mainly on two adjacent corners;
(c) a two-peak positive solution $u_3$ concentrated mainly on two diagonal corners;
(d) a two-peak sign-changing solution $u_4$ concentrated mainly on two adjacent corners;
(e) a two-peak sign-changing solution $u_5$ concentrated mainly on two diagonal corners;
(f) a three-peak positive solution $u_6$;
(g)-(h) two three-peak sign-changing solutions $u_7$ and $u_8$;
(i) a four-peak positive solution $u_9$; 
(j)-(l) three four-peak sign-changing solutions $u_{10}$, $u_{11}$ and $u_{12}$.}
\label{fig:Henonrect12sols}
\end{figure}

\begin{figure}[!t]
\centering
\includegraphics[width=.75\textwidth]{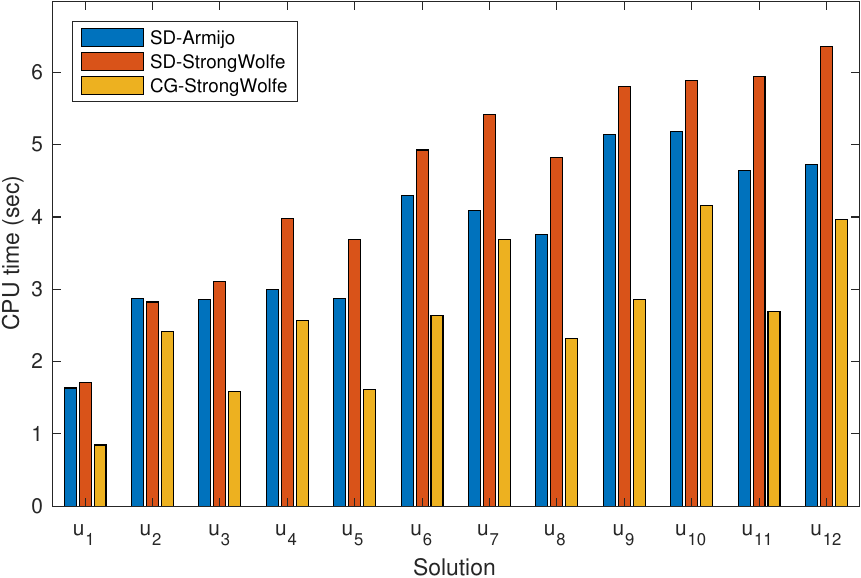}
\caption{Comparison of CPU time of LMMs for finding solutions of the H\'{e}non equation in Fig. \ref{fig:Henonrect12sols}.}
\label{fig:Henoncpubar}
\end{figure}

\subsection{Numerical results for the Chandrasekhar equation}\label{sec:numer-Chand}

Here, we report numerical results of Case~3, i.e., the Chandrasekhar equation as 
\begin{equation}
\left\{
\begin{aligned}
  -\Delta u(\mathbf{x}) &=(u^2(\mathbf{x})+2u(\mathbf{x}))^{3/2}, & \mathbf{x}\in\Omega, \\
  u(\mathbf{x}) &=0, & \mathbf{x}\in\partial\Omega.
\end{aligned}
\right.
\end{equation}
We consider a dumbbell-shaped domain $\Omega$ as depicted in Fig.~\ref{fig:CdrskDumb7sols}(a). It contains a smaller disk centered at $(-1,0)$ with radius $0.5$ and a larger disk centered at $(2,0)$ with radius $1$. A corridor of width $0.4$, symmetric respect to the $x_1$-axis, is constructed to link the two disks. In this case, we focus on finding multiple positive solutions. Limited by the length of the paper, we only present seven different positive solutions, labeled by $u_1,u_2,\ldots,u_{7}$, in Fig.~\ref{fig:CdrskDumb7sols}(b)-(h) for their profiles and features. The corresponding information on the support space $L$, subdomains $\Omega_1$ and $\Omega_2$ used in \eqref{eq:poisson-v0}, and energy values of these solutions is listed in Table~\ref{tab:CdrskDumb}. In addition, numerical comparisons of the {\tt SD-StrongWolfe}, {\tt CG-StrongWolfe} and {\tt SD-Armijo} in terms of CPU times for computing these solutions of the Chandrasekhar equation are provided in Fig.~\ref{fig:Chandrasekharcpubar}. Finally, the comparison of the computational efficiency of the three LMMs by increasing the elements and then freedoms in the FEM for computing the gradient direction $g_k$ in the iterations for finding $u_1$ are listed in Table \ref{tab:CdrskDumb2sols}.

From Figs.~\ref{fig:CdrskDumb7sols}-\ref{fig:Chandrasekharcpubar}, Tables~\ref{tab:CdrskDumb}-\ref{tab:CdrskDumb2sols} and additional results not shown here, we observe that three LMM algorithms considered can effectively find multiple positive solutions of the Chandrasekhar equation. It is also observed that the FEM mesh size has little effect on the number of iterations of the three algorithms. Again, the {\tt CG-StrongWolfe} also shows the best performance in this case.

Above all, numerical experiments in this section indicate that the CG-type direction indeed speed up the LMM greatly.

\begin{table}[!t]
\centering
\caption{The information to corresponding solutions of the Chandrasekhar equation in Fig.~\ref{fig:CdrskDumb7sols}.}
\label{tab:CdrskDumb}
\small
\begin{tabular}{lrllc}
\hline
$u_n$ & ~~~$E(u_n)$~ & ~~$L$~~~~~ & ~$\Omega_1$ ($\Omega_2=\varnothing$) & Graphics \\
\hline
 $u_1$ &   1.6624 & ~$\{0\}$     & $\{(x_1-2)^2+x_2^2<1\}$      & Fig.~\ref{fig:CdrskDumb7sols}(b) \\
 $u_2$ &  18.0067 & ~$\{0\}$     & $\{(x_1+1)^2+x_2^2<0.5\}$    & Fig.~\ref{fig:CdrskDumb7sols}(c) \\
 $u_3$ & 108.0580 & ~$\{0\}$     & $\{(x_1-0.25)^2+x_2^2<0.1\}$ & Fig.~\ref{fig:CdrskDumb7sols}(d) \\
 $u_4$ &  19.6691 & ~$[u_1]$     & $\{(x_1+1)^2+x_2^2<0.5\}$    & Fig.~\ref{fig:CdrskDumb7sols}(e) \\
 $u_5$ & 109.6897 & ~$[u_1]$     & $\{(x_1-0.25)^2+x_2^2<0.1\}$ & Fig.~\ref{fig:CdrskDumb7sols}(f) \\
 $u_6$ & 125.8846 & ~$[u_2]$     & $\{(x_1-0.25)^2+x_2^2<0.1\}$ & Fig.~\ref{fig:CdrskDumb7sols}(g) \\
 $u_7$ & 127.5247 & ~$[u_1,u_2]$ & $\{(x_1-0.25)^2+x_2^2<0.1\}$ & Fig.~\ref{fig:CdrskDumb7sols}(h) \\
\hline
\end{tabular}
\end{table}

\begin{figure}[!t]
\centering
\includegraphics[width=.45\textwidth]{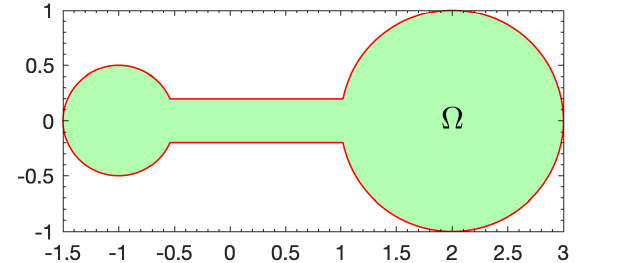}
\includegraphics[width=.45\textwidth]{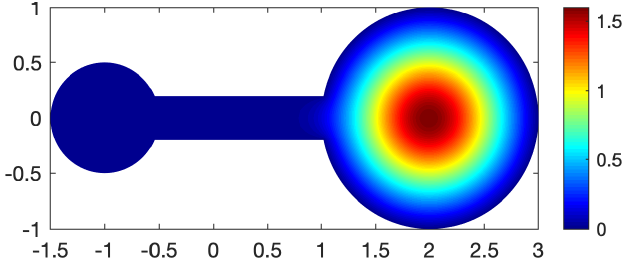} \\
\hspace*{.24\textwidth} (a)\hfill (b) \hspace*{.25\textwidth} \\[5pt]
\includegraphics[width=.45\textwidth]{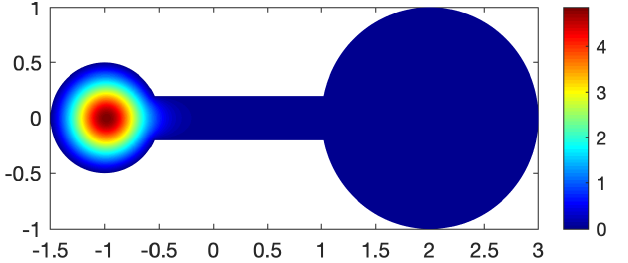}
\includegraphics[width=.45\textwidth]{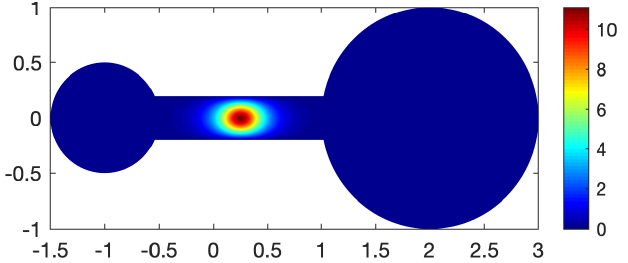} \\
\hspace*{.24\textwidth} (c)\hfill (d) \hspace*{.25\textwidth} \\[5pt]
\includegraphics[width=.45\textwidth]{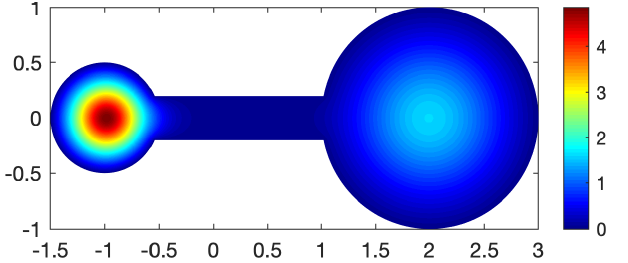}
\includegraphics[width=.45\textwidth]{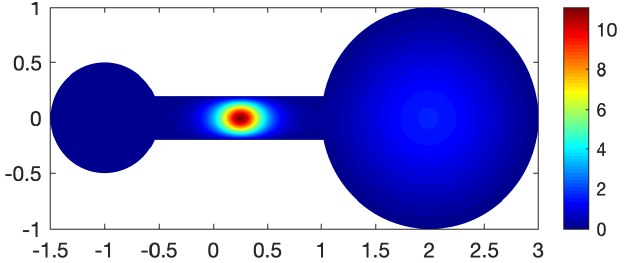} \\
\hspace*{.24\textwidth} (e)\hfill (f) \hspace*{.25\textwidth} \\[5pt]
\includegraphics[width=.45\textwidth]{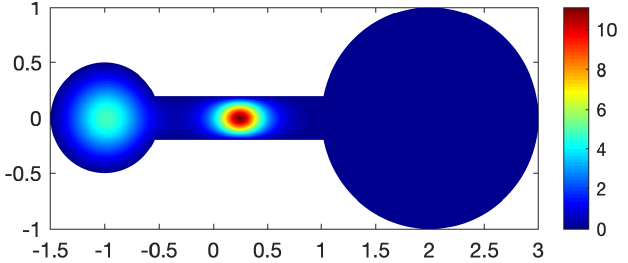} 
\includegraphics[width=.45\textwidth]{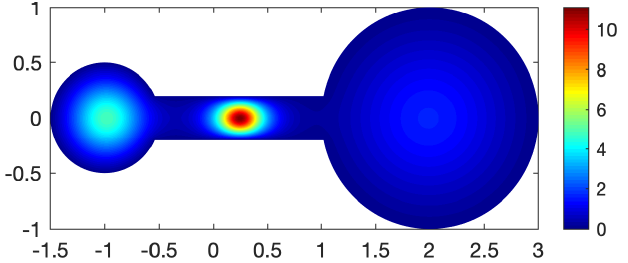} \\
\hspace*{.24\textwidth} (g)\hfill (h) \hspace*{.25\textwidth} \\
\caption{(a) A dumbbell-shaped domain. 
(b)-(h) Profiles of seven different positive solutions of the Chandrasekhar equation: 
(b) the single-peak positive ground state solution $u_1$ concentrated mainly on the larger disk;
(c) a single-peak positive solution $u_2$ concentrated mainly on the smaller disk;
(d) a single-peak positive solution $u_3$ concentrated mainly on the corridor;
(e)-(g) three two-peak positive solutions $u_4$, $u_5$ and $u_6$; 
(h) a three-peak positive solution $u_7$.}
\label{fig:CdrskDumb7sols}
\end{figure}

\begin{figure}[!t]
\centering
\includegraphics[width=.75\textwidth]{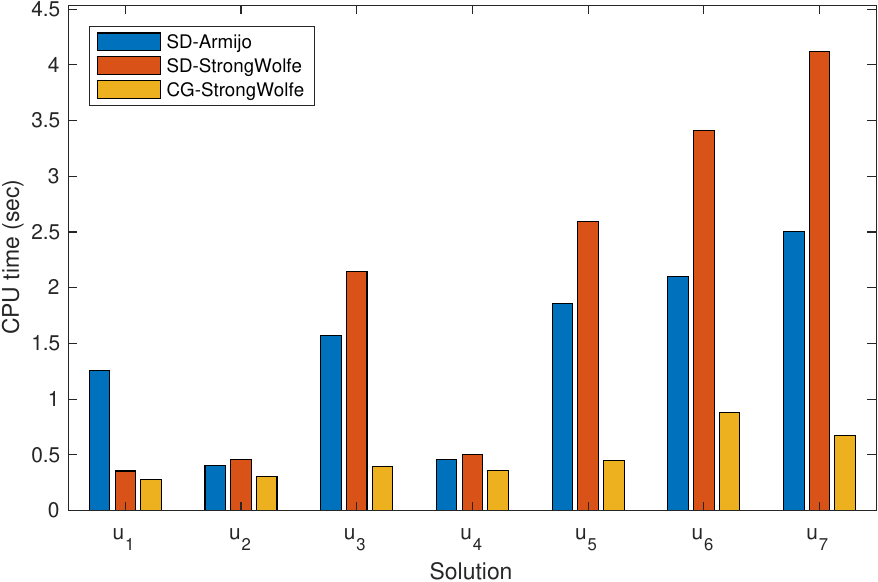}
\caption{Comparison of CPU time of LMMs for finding solutions of the Chandrasekhar equation in Fig.~\ref{fig:CdrskDumb7sols}.}
\label{fig:Chandrasekharcpubar}
\end{figure}

\begin{table}[!t]
\centering
\caption{Comparison of the number of iterations (\#its) and CPU time (time) in seconds of LMMs with different number of triangular elements ($n_T$) used in the FEM for computing the gradient direction $g_k$ in the iterations for finding the solution $u_1$ of the Chandrasekhar equation in Fig.~\ref{fig:CdrskDumb7sols}.}
\label{tab:CdrskDumb2sols}
\small
\begin{tabular}{lcccccc}
\hline
LMMs & \multicolumn{2}{c}{\tt SD-Armijo} &\multicolumn{2}{c}{\tt SD-StrongWolfe} &\multicolumn{2}{c}{\tt CG-StrongWolfe}\\
$n_T$ & \#its & time & ~\#its & time & ~\#its & time \\
\hline
3744    &51 & 0.4329   &13 & 0.1270   &10 &0.0978 \\
6050    &51 & 0.5782   &13 & 0.1704   &9  &0.1314 \\
9732    &51 & 0.8287   &13 & 0.2540   &9  &0.1757 \\
15552   &51 & 1.2944   &13 & 0.3921   &9  &0.2843 \\
55472   &51 & 4.2249   &13 & 1.2340   &9  &0.9055 \\
152696  &51 & 12.018   &13 & 3.3641   &9  &2.4026 \\
635658  &51 & 72.296   &13 & 19.660   &9  &14.258 \\
\hline
\end{tabular}
\end{table}

\section{Conclusions}\label{sec:cons}

In this paper, we introduced a framework of normalized Wolfe-Powell-type local minimax method (NWP-LMM) based on general descent directions and the normalized Wolfe-Powell-type and strong Wolfe-Powell-type step-size search rules for finding multiple unstable solutions of semilinear elliptic problems. Under certain conditions on the local peak selection and general descent directions, the feasibility and global convergence of the NWP-LMM were rigorously verified in the functional analysis level. In addition, two feasible types of descent directions, i.e., preconditioned steepest descent directions and the conjugate gradient-type direction, were proposed and discussed. The global convergence of the NWP-LMM combined with the preconditioned steepest descent directions was also provided. Extensive numerical results for several semilinear elliptic equations, including the nonlinear Schr\"{o}dinger equation, H\'{e}non equation and Chandrasekhar equation in 2D, were reported with their multiple solutions displayed to illustrate the effectiveness and robustness of our approach. The superior numerical performance of the NWP-LMM combined with the conjugate gradient-type direction was observed in extensive numerical experiments, while the rigorous verification for its global convergence is ongoing. Furthermore, designing more efficient preconditioned steepest descent or preconditioned conjugate gradient-type directions within the framework of LMM by constructing appropriate preconditioners to further improve the efficiency of computing multiple solutions will be our future work. Finally, it is worthwhile to point out that, following the line of our approach, the steepest descent direction can be replaced by a general descent direction in the devise of the traditional normalized Armijo-type and Goldstein-type local minimax algorithms, and both the feasibility and global convergence of the them can be verified.

\footnotesize
\bibliographystyle{abbrv}
\bibliography{ref_nwplmm}

\begin{thebibliography}{10}

\bibitem{Al-Baali1985}
M.~Al-Baali.
\newblock Descent property and global convergence of the {F}letcher-{R}eeves
  method with inexact line search.
\newblock {\em IMA J. Numer. Anal.}, 5(1):121--124, 1985.

\bibitem{Chang1993}
K.-C. Chang.
\newblock {\em Infinite Dimensional Morse Theory and Multiple Solution
  Problems}.
\newblock Birkh\"{a}user Boston, 1993.

\bibitem{CX2004CMA}
C.~Chen and Z.~Xie.
\newblock Search extension method for multiple solutions of a nonlinear
  problem.
\newblock {\em Comput. Math. Appl.}, 47:327--343, 2004.

\bibitem{CX2008SCM}
C.~Chen and Z.~Xie.
\newblock Analysis of search-extension method for finding multiple solutions of
  nonlinear problem.
\newblock {\em Sci. China Ser. A-Math.}, 51(1):42--54, 2008.

\bibitem{CZN2000IJBC}
G.~Chen, J.~Zhou, and W.-M. Ni.
\newblock Algorithms and visualization for solutions of nonlinear elliptic
  equations.
\newblock {\em Internat. J. Bifur. Chaos}, 10(07):1565--1612, 2000.

\bibitem{CM1993NATMA}
Y.~S. Choi and P.~J. McKenna.
\newblock A mountain pass method for the numerical solution of semilinear
  elliptic problems.
\newblock {\em Nonlinear Anal. Theor. Meth. Appl.}, 20(4):417--437, 1993.

\bibitem{DaiYuan2006}
Y.~Dai and Y.-X. Yuan.
\newblock {\em Nonlinear Conjugate Gradient Methods (in Chinese)}.
\newblock Shanghai Scientific \& Technical Publishers, 2000.

\bibitem{DGPS1999RMP}
F.~Dalfovo, S.~Giorgini, L.~P. Pitaevskii, and S.~Stringari.
\newblock Theory of {B}ose-{E}instein condensation in trapped gases.
\newblock {\em Rev. Mod. Phys.}, 71:463--512, 1999.

\bibitem{DCC1999NA}
Z.~Ding, D.~Costa, and G.~Chen.
\newblock A high-linking algorithm for sign-changing solutions of semilinear
  elliptic equations.
\newblock {\em Nonlinear Anal.}, 38(2):151--172, 1999.

\bibitem{ERV2002PRB}
W.~E, W.~Ren, and E.~Vanden-Eijnden.
\newblock String method for the study of rare events.
\newblock {\em Phys. Rev. B}, 66(5):052301, 2002.

\bibitem{EZ2011NL}
W.~E and X.~Zhou.
\newblock The gentlest ascent dynamics.
\newblock {\em Nonlinearity}, 24(6):1831--1842, 2011.

\bibitem{Fletcher1987}
R.~Fletcher.
\newblock {\em Practical Methods of Optimization}.
\newblock John Wiley \& Sons, Chichester, 1987.

\bibitem{FR1964CJ}
R.~Fletcher and C.~M. Reeves.
\newblock Function minimization by conjugate gradients.
\newblock {\em Comput. J.}, 7(2):149--154, 1964.

\bibitem{HZ2006PJO}
W.~Hager and H.~Zhang.
\newblock A survey of nonlinear conjugate gradient methods.
\newblock {\em Pac. J. Optim.}, 2(1):35--58, 2006.

\bibitem{HJ1999JCP}
G.~Henkelman and H.~J\'{o}nsson.
\newblock A dimer method for finding saddle points on high dimensional
  potential surfaces using only first derivatives.
\newblock {\em J. Chem. Phys.}, 111(15):7010--7022, 1999.

\bibitem{LWZ2013JSC}
A.~Le, Z.-Q. Wang, and J.~Zhou.
\newblock Finding multiple solutions to elliptic {PDE} with nonlinear boundary
  conditions.
\newblock {\em J. Sci. Comput.}, 56(3):591--615, 2013.

\bibitem{LZ2001SISC}
Y.~Li and J.~Zhou.
\newblock A minimax method for finding multiple critical points and its
  applications to semilinear {PDE}s.
\newblock {\em SIAM J. Sci. Comput.}, 23(3):840--865, 2001.

\bibitem{LZ2002SISC}
Y.~Li and J.~Zhou.
\newblock Convergence results of a local minimax method for finding multiple
  critical points.
\newblock {\em SIAM J. Sci. Comput.}, 24(3):865--885, 2002.

\bibitem{LWZ2017JSC}
Z.~Li, Z.-Q. Wang, and J.~Zhou.
\newblock A new augmented singular transform and its partial
  {Newton}-correction method for finding more solutions.
\newblock {\em J. Sci. Comput.}, 71(2):634--659, 2017.

\bibitem{LXY2021CMS}
W.~Liu, Z.~Xie, and W.~Yi.
\newblock Normalized {G}oldstein-type local minimax method for finding multiple
  unstable solutions of semilinear elliptic {PDE}s.
\newblock {\em Commun. Math. Sci.}, 19(1):147--174, 2021.

\bibitem{LXY2021SSM}
W.~Liu, Z.~Xie, and Y.~Yuan.
\newblock Convergence analysis of a spectral-{Galerkin}-type search extension
  method for finding multiple solutions of semilinear problems (in {Chinese}).
\newblock {\em Sci. Sin. Math.}, 51(9):1407--1431, 2021.

\bibitem{LXY-CGAD}
W.~Liu, Z.~Xie, and Y.~Yuan.
\newblock A constrained gentlest ascent dynamics and its applications to
  finding excited states of {B}ose--{E}instein condensates.
\newblock {\em J. Comput. Phys.}, 473:111719, 2023.

\bibitem{Nocedal2006}
J.~Nocedal and S.~J. Wright.
\newblock {\em Numerical Optimization, Springer Series in Operations Research
  and Financial Engineering}.
\newblock Springer, New York, 2nd edition, 2006.

\bibitem{Powell1976}
M.~J.~D. Powell.
\newblock Some global convergence properties of a variable metric algorithm for
  minimization without exact line searches.
\newblock In {\em Nonlinear Programming {\rm(R. W. Cottle and C. E. Lemke
  eds.). SIAM-AMS Proc., Vol. IX. SIAM Publications, Philadelphia, 53-72}},
  1976.

\bibitem{Rabinowitz1986}
P.~H. Rabinowitz.
\newblock {\em Minimax Methods in Critical Point Theory with Applications to
  Differential Equations}.
\newblock CBMS Reg. Conf. Ser. Math., No. 65, Amer. Math. Soc., Providence, R.
  I., 1986.

\bibitem{RV2013JCP}
W.~Ren and E.~Vanden-Eijnden.
\newblock A climbing string method for saddle point search.
\newblock {\em J. Chem. Phys.}, 138(13):134105, 2013.

\bibitem{SunYuan2006}
W.~Sun and Y.-X. Yuan.
\newblock {\em Optimization Theory and Methods: Nonlinear Programming}.
\newblock Springer, 2006.

\bibitem{W1969SIAMRev}
P.~Wolfe.
\newblock Convergence conditions for ascent methods.
\newblock {\em SIAM Rev.}, 11(2):226--235, 1969.

\bibitem{W1971SIAMRev}
P.~Wolfe.
\newblock Convergence conditions for ascent methods. {II}: {S}ome corrections.
\newblock {\em SIAM Rev.}, 13(2):185--188, 1971.

\bibitem{XCX2005IMA}
Z.~Xie, C.~Chen, and Y.~Xu.
\newblock An improved search-extension method for computing multiple solutions
  of semilinear {PDE}s.
\newblock {\em IMA J. Numer. Anal.}, 25(3):549--576, 2005.

\bibitem{XYZ2015JCAM}
Z.~Xie, W.~Yi, and J.~Zhou.
\newblock An augmented singular transform and its partial {N}ewton method for
  finding new solutions.
\newblock {\em J. Comput. Appl. Math.}, 286:145--157, 2015.

\bibitem{XYZ2012SISC}
Z.~Xie, Y.~Yuan, and J.~Zhou.
\newblock On finding multiple solutions to a singularly perturbed {N}eumann
  problem.
\newblock {\em SIAM J. Sci. Comput.}, 34(1):A395--A420, 2012.

\bibitem{YZ2005SISC}
X.~Yao and J.~Zhou.
\newblock A minimax method for finding multiple critical points in {B}anach
  spaces and its application to quasi-linear elliptic {PDE}.
\newblock {\em SIAM J. Sci. Comput.}, 26(5):1796--1809, 2005.

\bibitem{YHZ2022JSC}
J.~Yin, Z.~Huang, and L.~Zhang.
\newblock Constrained high-index saddle dynamics for the solution landscape
  with equality constraints.
\newblock {\em J. Sci. Comput.}, 91:62, 2022.

\bibitem{YYZ2021SCM}
J.~Yin, B.~Yu, and L.~Zhang.
\newblock Searching the solution landscape by generalized high-index saddle
  dynamics.
\newblock {\em Sci. China Math.}, 64:1801--1816, 2021.

\bibitem{YZZ2019SISC}
J.~Yin, L.~Zhang, and P.~Zhang.
\newblock High-index optimization-based shrinking dimer method for finding
  high-index saddle points.
\newblock {\em SIAM J. Sci. Comput.}, 41(6):A3576--A3595, 2019.

\bibitem{ZD2012SINUM}
J.~Zhang and Q.~Du.
\newblock Shrinking dimer dynamics and its applications to saddle point search.
\newblock {\em SIAM J. Numer. Anal.}, 50(4):1899--1921, 2012.

\bibitem{ZRSD2016npj}
L.~Zhang, W.~Ren, A.~Samanta, and Q.~Du.
\newblock Recent developments in computational modelling of nucleation in phase
  transformations.
\newblock {\em npj Comput. Mater.}, 2:16003, 2016.

\bibitem{Z2017CAMC}
J.~Zhou.
\newblock Solving multiple solution problems: computational methods and theory
  revisited.
\newblock {\em Commun. Appl. Math. Comput.}, 31(1):1--31, 2017.

\end{thebibliography}

\end{document}